\newtheorem{remark}{Remark}
\newtheorem{lemma}{Lemma}
\newtheorem{theorem}{Theorem}
\newtheorem{corollary}{Corollary}
\newtheorem{definition}{Definition}
\newcommand{\R}{\mathbb{R}}
\newcommand{\ind}{\,\mbox{d}}
\newcommand{\normmm}[1]{{\left\vert\kern-0.3ex\left\vert\kern-0.25ex\left\vert #1 
    \right\vert\kern-0.25ex\right\vert\kern-0.3ex\right\vert}}
\begin{document}
\date{}

\title{Single Mode Multi-frequency Factorization Method for the Inverse Source Problem  in Acoustic Waveguides}
\author[]{Shixu Meng$^1$}\footnotetext[1]{Academy of Mathematics and Systems Science, Chinese Academy of Sciences,
Beijing 100190, China.  {\tt shixumeng@amss.ac.cn}}
\begin{abstract}
This paper investigates the inverse source problem with a single propagating mode at multiple frequencies in an acoustic waveguide. The goal is to provide both theoretical justifications and efficient algorithms for imaging extended sources using the sampling methods. In contrast to the existing far/near field operator based on the integral over the space variable in the sampling methods,  a multi-frequency far-field operator  is introduced based on the integral over the frequency variable. This far-field operator is defined in a way to incorporate the possibly non-linear dispersion relation, a unique feature in waveguides. The factorization method is deployed to establish a rigorous characterization of the {range support which is the support of source in the direction of wave propagation}. A related factorization-based sampling method is also discussed. These sampling methods are shown to be capable of imaging the range support of the source. Numerical examples are provided to illustrate the performance of the sampling methods, including an example to image a complete sound-soft block.
\end{abstract} 

\maketitle

\textbf{Key Words.} inverse source problem, waveguide, multi-frequency, sampling method, Helmholtz equation.
\section{Introduction}  \label{section intro}

Inverse scattering is of great importance in non-destructive testing, medical imaging, geophysical exploration and numerous problems associated with target identification. In the last thirty years, sampling methods such as the linear sampling method  \cite{ColtonKirsch}, the factorization method  \cite{Kirsch98} and their extensions  have attracted a lot of interests. These sampling methods do not require a priori information about the scattering objects, and provide both theoretical justifications and robust numerical algorithms. 
 There have been recent interests in inverse scattering for waveguides, mainly motivated by their applications in ocean acoustics,  non-destructive testing of slender structures, imaging in and of tunnels \cite{baggeroer1993overview,haack1995state,rizzo2010ultrasonic}. One of the  early works is the generalized dual space indicator method for underwater imaging
\cite{xu2000generalized}. The linear sampling method and factorization method were studied in acoustic waveguides \cite{arens2011direct,bourgeois2008linear,bourgeois2012use,monk2012sampling} and in elastic waveguides \cite{bourgeois2011use,bourgeois2013use}.  Theories and applications of sampling methods have been extended to periodic waveguides \cite{bourgeois2014identification,sun2013reconstruction}, a time domain linear sampling method \cite{monk2016inverse}, sampling methods in electromagnetic waveguide \cite{meng2021,monk2019near}. It is also worth mentioning related imaging methods such as the time migration imaging method in an acoustic terminating waveguide \cite{tsogka2017imaging},  in an electromagnetic waveguide \cite{chen2017direct} and the time reversal imaging in an electromagnetic terminating waveguide \cite{borcea2015imaging}. Relations between the time migration imaging and the factorization method as well as a factorization-based imaging method is discussed in \cite{borcea2019factorization}. We also refer to \cite{bellis13} for related discussions on the connections among different imaging methods in the homogeneous space case. It is worth mentioning that a scatterer may be invisible in a waveguide that supports one propagating mode at a single frequency \cite{dhia2017perfect}. 

One difference between the waveguide and the homogeneous whole space is that the waveguide supports finitely many propagating modes and infinitely many evanescent modes at a fixed frequency. The theories of the linear sampling method and factorization method in waveguides \cite{arens2011direct,BORCEA2019556,borcea2019factorization,bourgeois2008linear,bourgeois2012use,monk2012sampling} have been established using both propagating modes and evanescent modes. Particularly in the far-field, the measurements stem from the propagating modes so that the linear sampling method and factorization method are usually implemented using only the propagating modes. The imaging results are good as far as there are sufficiently many  propagating modes. However, an interesting observation made in \cite{monk2012sampling} is that the linear sampling method may not be capable of imaging  a complete block ({ where a complete block could be an obstacle spanning the entire cross-section}) using  far-field measurements at a single frequency.  This seems not a problem due to the sufficiency or deficiency of propagating modes. This motivates us to investigate further the role of propagating modes. One fact about the propagating modes is that each propagating mode propagates with different group velocity, indicated by the dispersion relation which is a unique feature in waveguide. 
Intuitively, a single propagating mode at multi-frequencies may help understand further the problem \cite{monk2012sampling} of imaging a complete block with far-field measurements, since the phase (travel time) provides information on the bulk location.  The efficiency of using  a single propagating mode may be of practical interests in long-distance communication optical devices or tunnels.  The above discussions lead to the following question in waveguide imaging: how to make use of a single propagating mode at multiple frequencies in a mathematically rigorous way? 

While most of the existing works focus on the sampling methods at a single frequency with full-aperture measurements, the time migration imaging method \cite{tsogka2017imaging} and the linear sampling method \cite{BORCEA2019556} investigated the limited-aperture problem with multiple frequencies. The time domain linear sampling method \cite{monk2016inverse} amounts to use multiple frequency measurements. However, there seems no work to treat the problem with less measurements (such as backscattering measurements obtained from a single propagating mode) at multiple frequencies to lay the  theoretical foundation for extended scatterers. This is one of the motivations of this paper: to provide theoretical foundation for extended sources using the sampling methods with a single propagating mode at multiple frequencies. This necessitates the study of the factorization method which provides theoretical justifications for extended sources.   Another motivation  is to help understand further the problem of imaging  a complete block of the waveguide as pointed out in \cite{monk2012sampling}. Though the single frequency linear sampling method may not be capable of imaging  a complete block using  far-field measurements, measurements with a single mode at multiple frequencies may help locate such a block efficiently. { In a certain frequency range, the waveguide supports a single propagating mode so that the far-field measurements stem from such single propagating mode; such a singe mode waveguide was also studied \cite{chesnel2018} for applications such as invisibility and perfect reflectivity.}  Based on these two motivations, this paper considers the factorization method {for the inverse source problem} with   \textit{single mode multi-frequency measurements} for a waveguide that supports a single propagating mode; {the measurements are made at one opening of the waveguide, and we call these ``backscattering'' measurements (where ``'' is to distinguish the inverse source problem from the inverse scattering problem).} A related factorization-based sampling method would also be discussed. {These sampling methods are shown to be capable of imaging the range support of the source, but not the support in the cross-section direction. With a single propagating mode, it seems to us that only the support in one direction could be imaged. It was similarly reported in \cite{GriesmaierSchmiedecke-source} that the support in one direction can be imaged with far-field measurements at a single direction (and its opposite direction).}


The methods in this paper inherit the tradition of the classical sampling methods to define a data operator in the far-field. However, in contrast to the existing far/near field operator based on the integral over the space variable,  a \textit{multi-frequency far-field operator}  is introduced based on the integral over the frequency variable.  This operator has a symmetric factorization which allows us to investigate the factorization method and the factorization-based sampling method. Sampling methods to treat the inverse source problems in the whole space were studied in \cite{AlaHuLiuSun,GriesmaierSchmiedecke-source,LiuMeng2021}, including the factorization method and direct/orthogonal sampling method. One difference between the waveguide imaging in this paper and the imaging in the whole space arises from the dispersion relation, a unique feature in waveguide. This brings an additional difficulty in defining or factorizing the multi-frequency far-field operator. This paper overcomes this difficulty by incorporating the dispersion relation into the definition of the far-field operator.  Another difference is that the measurements under consideration are ``backscattering'' measurements (in contrast to measurements obtained in two opposite directions in the homogeneous space case \cite{GriesmaierSchmiedecke-source}); we shall show how to use these less measurements to design appropriate multi-frequency operators to deploy the factorization method. 
As demonstrated in \cite{AlaHuLiuSun,GriesmaierSchmiedecke-source,LiuMeng2021} and the references therein, one may hope to image an approximate support of the source with only one measurement point/direction for imaging in the whole space. This paper reveals a similar result in the waveguide imaging problem: it turns out that the range support of the source can be reconstructed with a single propagating mode at multiple frequencies.

This paper is further organized as follows. Section \ref{section model} provides the mathematical model  for the inverse source problem in a waveguide. We also briefly discuss the concept of propagating/evanescent modes and dispersion relation, as well as the Green function and the forward problem. The multi-frequency far-field operator is proposed in Section \ref{section operator}, and it is shown that it has a symmetric factorization where the factorized middle operator is coercive. Section \ref{section factorization method} is devoted to the factorization method which provides a mathematically rigorous characterization of the source support. Motivated by the factorization of the multi-frequency far-field operator, a factorization-based sampling method is then discussed in Section \ref{section factorization based sampling method}. Finally numerical examples are provided in Section \ref{section numerical example} to illustrate the performance of the two sampling methods to image extended sources. As an extension and application, a numerical example is also provided to illustrate the potential of the multi-frequency sampling methods to image a complete sound-soft block. { Section \ref{section design multi-frequency operator and source assumption} discusses another two multi-frequency operators for sampling methods which impose less restrictive condition on the source. Finally we conclude this paper with a conclusion in Section \ref{section conclusion}.}

\section{Mathematical model and problem setup} \label{section model}

Let us now introduce the mathematical model of the inverse source problem. The   waveguide in $\R^d$ is given by $W=(-\infty, \infty) \times \Sigma$, where $\Sigma$ is the cross-section of the waveguide. In this paper we restrict ourself to the two dimensional case $d=2$ {where $\Sigma=(0,|\Sigma|)$ with $|\Sigma|$ denoting the length of $\Sigma$}, and remark that the extension to the three dimensional case can follow similarly. We consider the model that the boundary of the waveguide is either sound-hard, sound-soft, or mixed (Dirichlet on part of the boundary and Neumann on the remaining boundary).  Let us denote the support of the source $f$ by $D$ with the assumption that $f\in L^\infty(D)$ and $D$ is a domain with Lipschitz boundary. See Figure \ref{Forward perturbation} for an illustration of the problem. For any $x \in \R^d$, let $x=(x_1,x_\perp)$ be the coordinate of $x$ where $x_1$ is the variable in the range and $x_\perp$ is the variable in the cross-section. 

\begin{figure}[hb!] 
        \centering
            \begin{tikzpicture}[scale=0.3]
\draw (-15,5)  -- (10,5);
\draw (-15,-1)  -- (10,-1); 
\draw[fill=gray, opacity=0.7]     plot [smooth cycle] coordinates          { (-1,1) ( 0,3) (1, 2) (0,1)};
\draw  (-10,1) node[above, black]{\tiny $W=(-\infty, \infty) \times \Sigma$ };
\draw  (0.5,1.3) node[right, black]{\tiny $D$ };
            \end{tikzpicture} 

     \caption{
     \linespread{1}
     Source $D$ in the acoustic waveguide $W$.\label{Forward perturbation}
     } 
\end{figure}
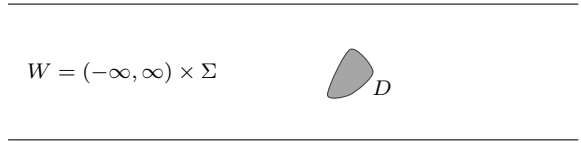

Let the interrogating wave number be a positive number $k$.  Let  $u^s(\cdot;k)$ be the wave field due to the source $f$. The wave field $u^s(\cdot;k)$ belongs to $\widetilde{H}^1_{loc}(W)$ and satisfies 
\begin{eqnarray}
\Delta u^s(\cdot;k) + k^2 u^s(\cdot;k) = { -}f \quad &&\mbox{in} \quad W, \label{ForwardUnbounded1} \\
\mathcal{B} u^s(\cdot;k) = 0 \quad &&\mbox{on} \quad \partial W, \label{ForwardUnbounded2} \\
u^s(\cdot;k) \quad && \mbox{satisfies a radiation condition}, \label{ForwardUnbounded3}
\end{eqnarray}
where the radiation condition is specified in Section \ref{subsection mode decomposition}, and $\mathcal{B}$ denotes either the Dirichlet, Neumann, or a mixed boundary condition, i.e.
\begin{equation} \label{def boundary condition}
\mathcal{B} u^s:=
\Bigg\{
\begin{array}{ccc}
u^s=0  &    \mbox{Dirichlet}   \\
\frac{\partial u^s}{\partial \nu}=0  &    \mbox{Neumann}  \\
\mbox{Dirichlet on } \Gamma_1,  \mbox{ Neumann on } \Gamma_2 &     \mbox{mixed} 
\end{array},
\end{equation}
with {$\Gamma_1$ (resp. $\Gamma_2$) denoting either the top (resp. bottom)  or the bottom (resp. top) boundary}, $\nu$ is the unit outward normal,
and $\widetilde{H}^1_{loc}(W)$ is the set 
$$\{u: u|_{\Omega} \in H^1(\Omega), \forall \,  {   \Omega=(-b,b) \times \Sigma}   \mbox{ s.t. } D \subset \Omega\}
$$ with $H^1$ denoting the standard Sobolev space consisting of bounded $L^2$ functions whose derivatives belong to $L^2$.

 The goal is to reconstruct the support $D$ of the source  with a single propagating mode at multiple frequencies. 
In the case that the waveguide supports one propagating mode, the use of one propagating mode at multiple frequencies amounts to the use of multi-frequency measurements  at one measurement point. In particular, the inverse problem is to characterize the support $D$ of the source  from the following { non-vanishing} measurements

\begin{itemize}
\item ``Backscattering'' measurements:
\begin{equation} \label{measurements}
\{u^s_p({x^*};k): \quad {x^*} \in \{-a\} \times \Sigma,\, k \in K\}
\end{equation}
where $u^s_p$ represents the propagating part (far-field) of $u^s$,  ${x^*}$ is a measurement point at a measurement cross section $\{-a\} \times \Sigma$, and $K$ is the range of wavenumber (i.e. frequency).
\end{itemize}
Here $K$ is the range of wavenumber such that the waveguide supports only one propagating mode (which will be specified later in details in Section \ref{subsection mode decomposition}). In practice, $a>0$ is such that the measurements are at the far-field of the waveguide {(where the contribution from the evanescent part is negligible)}, this is feasible as the frequencies are always discrete numbers and the evanescent part will be negligible for some $a$. In this paper we fix the notation that  waves measured at $\{-a\} \times \Sigma$ are called ``backscattering''  measurements (to distinguish the inverse source problem from the inverse scattering problem)  and waves measured at $\{a\} \times \Sigma$ are called ``forward scattering'' measurements.  It is also possible to consider  both the ``backward and forward scattering'' measurements, i.e. measurements at $\{\pm a\} \times \Sigma$, however this type of measurements may not be applicable in applications where   measurements can only be made at one opening of the waveguide. Nevertheless, a detailed remark  is to be made in Section \ref{section design multi-frequency operator and source assumption} concerning this type of measurements. 


\subsection{Propagating modes} \label{subsection mode decomposition}
Let us introduce the eigensystem $\{\psi_n,\lambda_n^2\}_{n=1}^\infty$ (where we sort $0\le\lambda_n \le \lambda_{n+1}$) by
\begin{eqnarray}
\Delta_{\perp} \psi_n + \lambda_n^2 \psi_n = 0 \quad &&\mbox{in} \quad \Sigma, \\
\mathcal{B}_\perp \psi_n =0 \quad &&\mbox{on} \quad \partial \Sigma,
\end{eqnarray}
where
$\mathcal{B}_\perp$ (defined according to $\mathcal{B}$ in \eqref{def boundary condition}) denotes either the Dirichlet, Neumann, or mixed boundary condition, i.e.
\begin{equation*}
\mathcal{B}_\perp \psi_n=
\Bigg\{
\begin{array}{ccc}
\psi_n=0  &    \mbox{Dirichlet}   \\
\frac{\partial \psi_n}{\partial \nu_\perp}=0  &    \mbox{Neumann}  \\
\mbox{Dirichlet on } \Gamma_{1\Sigma},  \mbox{ Neumann on } \Gamma_{2\Sigma} &     \mbox{mixed} 
\end{array},
\end{equation*}
where $\nu_\perp$ denotes the unit outward normal to $\partial \Sigma$. {Here $\Gamma_{1\Sigma}$ is either the top $\{x_\perp: x_\perp = |\Sigma|\}$ or the bottom $\{x_\perp: x_\perp = 0\}$ part of $\partial \Sigma$ according to \eqref{def boundary condition}, and $\Gamma_{2\Sigma}$ is the remaining part of $\partial \Sigma$.}

For every fixed $k$ with $k^2 \not \in \{\lambda_n^2\}_{n=1}^\infty$,  {   the following physical wave functions (can be found via separation of variables) are solutions to the Helmholtz equation \eqref{ForwardUnbounded1} away from the support of the source}
\begin{equation}
\Bigg\{
\begin{array}{ccc}
e^{ \pm i \sqrt{k^2-\lambda_n^2} x_1} \psi_n(x_\perp),  &    k>\lambda_n   \\
e^{-\sqrt{\lambda_n^2-k^2} |x_1|} \psi_n(x_\perp),  &    k<\lambda_n  
\end{array}
, \quad n=1,2,\cdots, \quad {|x_1| \gg 1},
\end{equation}
here we have chosen the branch cut of the square root with positive imaginary part, and we call it a  propagating mode if $k>\lambda_n$ and an evanescent mode if $k<\lambda_n$. 

{
For our problem, we consider the  frequency  range $(\lambda_1,\lambda_2)$ such that the waveguide supports only one propagating mode  and we set  $K:=(\lambda_1,\lambda_2)$}. For later purposes we  note that the eigenfunction $\psi_1$ can be chosen as real-valued { with the following explicit expression 
\begin{equation*}
\psi_1(x_\perp)=
\left\{
\begin{array}{cccc}
\sqrt{ 2/|\Sigma|}\sin( \pi x_\perp/|\Sigma|)  &    \mbox{Dirichlet}   \\
 \sqrt{ 1/|\Sigma|}  &    \mbox{Neumann}  \\
\sqrt{ 2/|\Sigma| } \cos(\pi x_\perp /(2|\Sigma|))  &     \qquad \Gamma_{1\Sigma}=\{x_\perp: x_\perp = |\Sigma|\}\\
 \sqrt{  2/|\Sigma| } \sin(\pi x_\perp /(2|\Sigma|))  &     \qquad \Gamma_{1\Sigma}=\{x_\perp: x_\perp = 0\}
\end{array}
\right.
.
\end{equation*}
}

We now give the definition of the radiation condition.
\begin{definition}
We say a solution to the Helmholtz equation \eqref{ForwardUnbounded1} satisfies the radiation condition if 
\begin{equation}
u^s(x;k) = \sum_{n=1}^{N(k)} a_n e^{i\sqrt{k^2-\lambda_n^2} |x_1|} \psi_n(x_\perp) + \sum_{n=N(k)+1}^\infty a_n e^{-\sqrt{\lambda_n^2-k^2} |x_1|} \psi_n(x_\perp), \quad |x_1| \gg 1,
\end{equation}
where $N(k)$ is the number of the propagating modes, and $a_n$ are coefficients   determined by $u^s(x;k)$.

\end{definition}
This definition would ensure that the forward problem \eqref{ForwardUnbounded1}--\eqref{ForwardUnbounded3} has at most one solution, see for instance \cite{BORCEA2019556,bourgeois2008linear}.

\subsection{Dispersion relation}
For each propagating mode $e^{\pm i\sqrt{k^2-\lambda_n^2} x_1} \psi_n(x_\perp)$, we identify the concept of group wave number by
$$
\mu_n(k) := \sqrt{k^2-\lambda_n^2},
$$
and we call the set $\{(k, \mu_n(k)): k \in (\lambda_n,+\infty)\}$ the $n$-th ``dispersion relation", which describes how this propagating mode propagates with respect to a given  wave number $k$. To illustrate the dispersion relation, in Figure \ref{dispersion1} we plot $\{(k, \mu_1(k)): k \in (1,2)\}$ for a two-dimensional waveguide $(-\infty,\infty) \times (0,\pi)$ with Dirichlet boundary condition.  {For sake of completeness, we also define $\mu_n(k) := \sqrt{k^2-\lambda_n^2}$ (where the chosen branch cut of the square root is with positive imaginary part) for each evanescent modes when $k<\lambda_n$.}

This dispersion relation is a unique feature of waveguide (which is in contrast to the linear dispersion appeared in the far-field patterns in an homogeneous background) and poses difficulty in the single mode multi-frequency sampling method.
     \begin{figure}[ht!]
\includegraphics[width=0.46\linewidth]{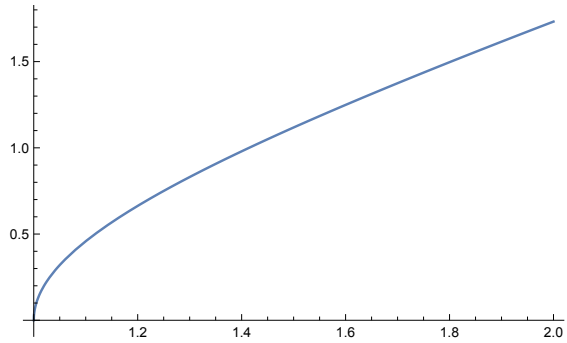}
     \caption{
     \linespread{1}
The plot of $\{(k, \mu_1(k)): k \in (1,2)\}$ for a two-dimensional waveguide $(-\infty,\infty) \times (0,\pi)$ with Dirichlet boundary condition.
     } \label{dispersion1}
    \end{figure}
\subsection{Green function and forward problem}
Now we introduce the Green function $G(x,y;k)$  for the waveguide $W$ that solves
\begin{eqnarray}
\Delta_x G(\cdot,y;k) + k^2 G(\cdot,y;k) = -\delta_y \quad &&\mbox{in} \quad W, \label{Green eqn 1}\\
\mathcal{B} G(\cdot,y;k) =0 \quad &&\mbox{on} \quad \partial W, \label{Green eqn 2}\\
G(\cdot,y;k) \quad  &&\mbox{satisfies the radiation condition}, \label{Green eqn 3}
\end{eqnarray}
here the derivatives are with respect to variable $x$.  The Green function has the following  series expansion \cite{BORCEA2019556,bourgeois2008linear}
\begin{equation} \label{waveguide Green function}
G(x,y;k)=\sum_{n=1}^\infty \frac{i}{2\mu_n(k)} \psi_n(x_\perp)\psi_n(y_\perp)e^{i\mu_n(k) |x_1-y_1|}.
\end{equation}

Since the forward problem has at most one unique solution (see for instance \cite{BORCEA2019556,bourgeois2008linear}), then the following lemma follows directly by verifying $u^s$  \eqref{lemma eqn us IE} is a solution to the {forward problem} via the {volume} integral, see for instance \cite[Theorem 2.1]{colton2012inverse}. 
\begin{lemma}
There exists a unique solution $u^s \in \widetilde{H}^1_{loc}(W)$ to the {forward} problem \eqref{ForwardUnbounded1}--\eqref{ForwardUnbounded3}. Furthermore it holds that
\begin{equation} \label{lemma eqn us IE}
u^s(\cdot;k) = \int_D G(\cdot,y;k) f(y) \ind y.
\end{equation}
\end{lemma}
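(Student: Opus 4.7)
The plan has two essentially independent pieces: uniqueness, which I would simply invoke from the literature cited (\cite{BORCEA2019556,bourgeois2008linear}), and existence together with the integral representation, which is the computational content. Since uniqueness is already in hand, it suffices to show that the function
\begin{equation*}
u^s(x;k) := \int_D G(x,y;k) f(y) \ind y
\end{equation*}
lies in $\widetilde{H}^1_{loc}(W)$ and satisfies \eqref{ForwardUnbounded1}--\eqref{ForwardUnbounded3}; by uniqueness, this must then be the unique solution.

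The first step is regularity. Since $f \in L^\infty(D)$ with $D$ bounded and Lipschitz, I would decompose the Green function as $G(x,y;k) = \Phi(x,y;k) + R(x,y;k)$, where $\Phi$ is the free-space fundamental solution of the Helmholtz equation in $\R^2$ and $R$ is a smooth correction arising from the mode series \eqref{waveguide Green function} plus the image contributions enforcing the boundary condition on $\partial W$. The Newtonian-type singularity of $\Phi$ is of order $\log|x-y|$ in two dimensions, so classical volume potential theory (see for instance \cite[Theorem 2.1]{colton2012inverse}) gives that the contribution of $\Phi$ lies in $H^2_{loc}(\R^2)$ and hence in $H^1(\Omega)$ for every bounded $\Omega \subset W$ containing $D$. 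The correction $R(\cdot,y;k)$ is smooth in $x$ for $y \in D$ and bounded uniformly in $y$ on compact sets (with, of course, the usual exponential decay of evanescent modes for $|x_1|$ large and $|x_1 - y_1|$ not too small). This yields $u^s \in \widetilde{H}^1_{loc}(W)$.

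The second step is to verify the three equations. The Helmholtz equation \eqref{ForwardUnbounded1} follows by applying $\Delta + k^2$ under the integral: in the distributional sense,
\begin{equation*}
(\Delta_x + k^2) \int_D G(x,y;k) f(y) \ind y = \int_D \bigl(-\delta_y(x)\bigr) f(y) \ind y = -f(x),
\end{equation*}
using \eqref{Green eqn 1}; the rigorous justification is again standard volume potential theory. The boundary condition \eqref{ForwardUnbounded2} is transferred from $G$ to $u^s$ by dominated convergence (and a trace argument, since $\mathcal{B}$ may be Neumann), using \eqref{Green eqn 2} with the uniform bound on derivatives of $G(\cdot,y;k)$ for $y \in D$ and $x$ near $\partial W$. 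For the radiation condition \eqref{ForwardUnbounded3}, I would fix $|x_1|$ larger than $\sup_{y \in D}|y_1|$, substitute the mode expansion \eqref{waveguide Green function}, and interchange the finite/uniformly-convergent sum with the integral over $D$; this produces the required expansion
\begin{equation*}
u^s(x;k) = \sum_{n=1}^{N(k)} a_n e^{i\mu_n(k)|x_1|}\psi_n(x_\perp) + \sum_{n=N(k)+1}^\infty a_n e^{-\sqrt{\lambda_n^2-k^2}|x_1|}\psi_n(x_\perp),
\end{equation*}
with explicit coefficients $a_n = \tfrac{i}{2\mu_n(k)} \int_D \psi_n(y_\perp) e^{-i\mu_n(k) \operatorname{sgn}(x_1) y_1} f(y) \ind y$.

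The only genuine technical point, and therefore the main (mild) obstacle, is justifying the interchange of derivative/limit and integral in the presence of the $\log$ singularity of $G$ near $x=y$ and the infinite mode sum in \eqref{waveguide Green function}. Both issues are routine: the former by the standard volume-potential calculus for weakly singular kernels, and the latter by the exponential decay $e^{-\sqrt{\lambda_n^2-k^2}|x_1-y_1|}$ of the evanescent terms, which makes the series converge absolutely in $C^\infty$ on compact subsets of $\{x:|x_1-y_1|>0\}$ uniformly for $y$ in any compact set at a positive distance from $\{x_1 = y_1\}$. Once these interchanges are justified, the three properties above combine with the uniqueness in \cite{BORCEA2019556,bourgeois2008linear} to give \eqref{lemma eqn us IE} and complete the proof.
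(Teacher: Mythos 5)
Your proposal is correct and follows essentially the same route as the paper: uniqueness is quoted from \cite{BORCEA2019556,bourgeois2008linear}, and existence plus the representation \eqref{lemma eqn us IE} is obtained by verifying that the volume potential solves \eqref{ForwardUnbounded1}--\eqref{ForwardUnbounded3}, with the Helmholtz equation handled by standard volume-potential theory as in \cite[Theorem 2.1]{colton2012inverse}. You simply spell out the details (the splitting $G=\Phi+R$, the mode-series manipulations for the radiation condition) that the paper leaves implicit.
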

 

\section{The multi-frequency far-field operator and its factorization} \label{section operator}
The imaging methods under consideration are the qualitative methods or the sampling methods \cite{cakoni2016qualitative,cakoni2016inverse,cakoni2011linear,colton2012inverse}, and the  goal is to lay the  theoretical foundation for extended sources and design efficient imaging algorithms with these measurements. To achieve this, we introduce the following multi-frequency far-field operator and study its properties in this section.

To begin with, set $\omega_{\sigma\gamma}:=\sqrt{\lambda_1^2+(\sigma-\gamma)^2}$ ({ and details will be given in later context to motivate its definition}).
We introduce the single mode multi-frequency far-field operator $F:L^2(k_-,k_+) \to L^2(k_-,k_+)$ by
{
\small
\begin{equation} \label{definition far-field operator}
(F g)(\sigma;x^*) :=  \int_{k_-}^{k_+} -i\mu_1(\omega_{\sigma\gamma})\Big[H(\sigma-\gamma) e^{i\theta} u_p^s\left(x^*; \omega_{\sigma\gamma}\right)- H(\gamma-\sigma) e^{-i\theta}\overline{u_p^s\left(x^*; \omega_{\sigma\gamma}\right)} \Big] g(\gamma) \ind \gamma,
\end{equation}
}
$\sigma \in (k_-,k_+)$, where $u_p^s$ represents the propagating part of the wave field $u^s$, $H$ is the Heaviside function
$$
H(s)=
\Big\{
\begin{array}{ccc}
1 ,  &    s\ge0   \\
0 ,  &    s<0  
\end{array},
$$
{ $k_- =0$ and $k_+= \sqrt{\lambda_2^2-\lambda_1^2}$  }  is such that $\omega_{\sigma \gamma} \in (\lambda_1,\lambda_2), \forall \sigma \not= \gamma$. The assumption made on $f$ is   that $e^{i\theta}f>c_1>0$ or $e^{i\theta}f<c_2<0$ with some $\theta \in [0,2\pi)$. {We remark that this assumption in the ``backscattering'' case can be relaxed if both ``backward and forward scattering'' measurements are available (see Section \ref{multi-frequency operator backward and forward measurements}), or we can design another less obvious  multi-frequency operator (see Section \ref{section multi-frequency operator alpha}). We consider this case to fully illustrate the multi-frequency sampling methods in waveguide}. Another note is that $\psi_1({{x}^*_\perp})\not=0$ since otherwise the measurement set \eqref{measurements} would be a vanishing one.


{As can be seen, the possibly non-linear dispersion relation is incorporated into the definition of the far-field operator { via $\omega_{\sigma\gamma}$}. In particular, one finds via a direct calculation that $\mu_1(\omega_{\sigma\gamma}) = {|\sigma-\gamma|}$ which would allow us to investigate the factorization method further.} This is in contrast to the inverse source problem in the homogeneous whole space where the dispersion is already linear.

To deploy the factorization method based on the multi-frequency operator \eqref{definition far-field operator},  we first introduce the operator $S: L^2(k_-,k_+) \to L^2(D)$ by
\begin{equation} \label{operator S}
(S \varphi)(y) := \int_{k_-}^{k_+} \sqrt{\psi_1(y_\perp)}e^{-i \sigma (y_1-x^*_1)} \varphi(\sigma)  \ind \sigma, \quad y \in D,
\end{equation}
and  the operator $T: L^2(D) \to L^2(D)$ by
{
\begin{equation} \label{operator T}
(T h)(y) :=  \frac{e^{i\theta} \psi_1(x^*_\perp)f(y)h(y)  }{2}   , \quad y \in D.
\end{equation}
}
We can directly derive the adjoint operator of $S$, namely $S^*: L^2(D) \to L^2(k_-,k_+)$ by
\begin{equation} \label{operator S*}
(S^* h)(\sigma) :=  \int_D  \overline{\sqrt{\psi_1(y_\perp)}}e^{i \sigma (y_1-x^*_1)} h(y)  \ind y, \quad \sigma \in (k_-,k_+).
\end{equation}

{ We have chosen to keep the conjugate in \eqref{operator S*} to {\it formally} retain the structure of the adjoint.  Note that $\psi_1>0$ a.e. on the cross section, there is no confusion in the  use of square root in \eqref{operator S} and \eqref{operator S*}.}
In the following, we give a  factorization of the far-field operator. 
{Note that $\psi_1>0$  is used in the proof of the following theorem as well.}

\begin{theorem} \label{theorem factorization S*TS}
The far-field operator $F:L^2(k_-,k_+) \to L^2(k_-,k_+)$ given by \eqref{definition far-field operator} can be factorized by 
\begin{equation}
F = S^*TS,
\end{equation}
where $S: L^2(k_-,k_+) \to L^2(D)$ , $T: L^2(D) \to L^2(D)$, and $S^*: L^2(D) \to L^2(k_-,k_+)$ are given by \eqref{operator S},  \eqref{operator T}, and  \eqref{operator S*} respectively.
\end{theorem}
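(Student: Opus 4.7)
The plan is to substitute the volume-integral representation of $u^s$ from Lemma 1 into the definition \eqref{definition far-field operator}, extract the $n=1$ propagating mode (which is the only propagating one since $\omega_{\sigma\gamma}\in(\lambda_1,\lambda_2)$), simplify the two Heaviside branches separately, and then recognize the resulting double integral as the composition $S^*TS$ via Fubini.

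First I would record the identity $\mu_1(\omega_{\sigma\gamma})=|\sigma-\gamma|$, which is immediate from the definition $\omega_{\sigma\gamma}=\sqrt{\lambda_1^2+(\sigma-\gamma)^2}$. This is what motivates the prefactor $-i\mu_1(\omega_{\sigma\gamma})$ in \eqref{definition far-field operator}: it precisely cancels the $i/(2\mu_1)$ from the Green function series \eqref{waveguide Green function}, thus removing the apparent singularity at $\sigma=\gamma$. Since the measurement point $x^*=(-a,x^*_\perp)$ lies to the left of $D$, we have $|x^*_1-y_1|=y_1-x^*_1>0$ for every $y\in D$, so the propagating part of $u^s$ reduces to the single term
\[
u_p^s(x^*;\omega_{\sigma\gamma})=\frac{i\,\psi_1(x^*_\perp)}{2|\sigma-\gamma|}\int_D \psi_1(y_\perp)\,e^{i|\sigma-\gamma|(y_1-x^*_1)}\,f(y)\,dy.
\]

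Next I would evaluate the two pieces of the integrand separately. In the regime $\sigma>\gamma$ the contribution $-i\mu_1(\omega_{\sigma\gamma})\,e^{i\theta}u_p^s$ simplifies directly to $\tfrac{1}{2}e^{i\theta}\psi_1(x^*_\perp)f(y)\,\psi_1(y_\perp)\,e^{i(\sigma-\gamma)(y_1-x^*_1)}$ after the prefactor cancellation. The delicate step is the regime $\sigma<\gamma$: one has to unpack the complex conjugate of $u_p^s$ and then invoke the reality hypothesis on $f$, i.e.\ $e^{-i\theta}\overline{f(y)}=e^{i\theta}f(y)$, together with $|\sigma-\gamma|=\gamma-\sigma$ and the outer minus sign in \eqref{definition far-field operator}, to verify that exactly the same integrand is produced. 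I expect this to be the main obstacle, and it is precisely what motivates the slightly asymmetric design of $F$ together with the real-valuedness assumption on $e^{i\theta}f$; the reality of $\psi_1$ is also used here.

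With both Heaviside branches producing the same kernel, the splitting disappears and
\[
(Fg)(\sigma)=\int_{k_-}^{k_+}\!\!\int_D \frac{e^{i\theta}\psi_1(x^*_\perp)f(y)}{2}\,\psi_1(y_\perp)\,e^{i(\sigma-\gamma)(y_1-x^*_1)}\,dy\,g(\gamma)\,d\gamma.
\]
Finally I would apply Fubini, factor $\psi_1(y_\perp)=\overline{\sqrt{\psi_1(y_\perp)}}\,\sqrt{\psi_1(y_\perp)}$ (legitimate since $\psi_1>0$ a.e.\ on $\Sigma$), and split the $y$-integrand so that the inner $\gamma$-integral is $(Sg)(y)$ as in \eqref{operator S}, the pointwise multiplier is $T$ as in \eqref{operator T}, and the outer $y$-integral is $S^*$ acting on the result as in \eqref{operator S*}. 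Reading off the composition gives $F=S^*TS$.
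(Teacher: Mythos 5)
Your proposal is correct and follows essentially the same route as the paper: substitute the Green-function representation of $u_p^s$, use $\mu_1(\omega_{\sigma\gamma})=|\sigma-\gamma|$ to cancel the $i/(2\mu_1)$ prefactor, check that both Heaviside branches yield the same kernel via the reality of $\psi_1$ and $e^{i\theta}f$, and then read off $S^*TS$ using $\sqrt{\psi_1}=\overline{\sqrt{\psi_1}}$. Your explicit observation that $|x^*_1-y_1|=y_1-x^*_1$ because the measurement point lies to the left of $D$ is a small but welcome clarification that the paper leaves implicit when matching the kernel to the definitions of $S$ and $S^*$.
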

\begin{proof}
Set  $G_p$ the propagating part of the Green function $G$.
From the expression \eqref{lemma eqn us IE} of $u^s$ {and \eqref{waveguide Green function}}, it follows that, for $\sigma \not=\gamma$, the propagating part  $u_p^s$ reads
\begin{eqnarray*}
u^s_p({x^*};\omega_{\sigma\gamma}) &=& \int_D G_p\left({x^*},y; \omega_{\sigma\gamma}\right) f(y) \ind y \nonumber \\
&=& \frac{i}{2\mu_1(\omega_{\sigma\gamma})} \int_D \psi_1({x^*_\perp})\psi_1(y_\perp)e^{i\mu_1(\omega_{\sigma\gamma}) |x^*_1-y_1|}  f(y) \ind y, \quad \sigma \not=\gamma,\nonumber
\end{eqnarray*}
by noting that $\mu_1(\omega_{\sigma\gamma}) = |\sigma-\gamma|$, we then continue to derive that
\begin{eqnarray}
u^s_p({x^*};\omega_{\sigma\gamma}) &=& \frac{i}{2 |\sigma-\gamma| } \int_D \psi_1(x^*_\perp)\psi_1(y_\perp)e^{i |\sigma-\gamma|  |x^*_1-y_1|}  f(y) \ind y, \quad \sigma \not=\gamma,\nonumber
\end{eqnarray}
and consequently (note that $\psi_1$ and $e^{i\theta}f$ are real-valued)
\begin{eqnarray*}
&&\Big[H(\sigma-\gamma)e^{i\theta}u_p^s\left(x^*; \omega_{\sigma\gamma}\right)- H(\gamma-\sigma)e^{-i\theta}\overline{u_p^s\left(x^*; \omega_{\sigma\gamma}\right)}  \Big] \\
=&& \frac{ie^{i\theta}}{2 |\sigma-\gamma| } \int_D \psi_1(x^*_\perp)\psi_1(y_\perp)e^{i (\sigma-\gamma)  |x^*_1-y_1|}  f(y) \ind y, \quad \sigma \not=\gamma.
\end{eqnarray*}
Now from the definition of $F$, it follows that
\begin{eqnarray*}
&&(F g)(\sigma;x^*) =   
\int_{k_-}^{k_+}  { -i } \mu_1(\omega_{\sigma\gamma}) \Big[H(\sigma-\gamma)e^{i\theta}u_p^s\left(x^*; \omega_{\sigma\gamma}\right)- H(\gamma-\sigma)e^{-i\theta}\overline{u_p^s\left(x^*; \omega_{\sigma\gamma}\right)} \Big] g(\gamma) \ind \gamma \\
&=&1/2\int_{k_-}^{k_+} \int_D  \psi_1(y_\perp)e^{i (\sigma-\gamma) |x^*_1-y_1|}  e^{i\theta} { \psi_1(x^*_\perp)}  f(y) g(\gamma)  \ind y \ind \gamma \\
&=&     \int_D e^{i \sigma |x^*_1-y_1|} \overline{\sqrt{\psi_1(y_\perp)} }\left[\left(\int_{k_-}^{k_+}  \sqrt{\psi_1(y_\perp)}e^{-i \gamma |x^*_1-y_1|} g(\gamma)  \ind \gamma \right) \frac{e^{i\theta} { \psi_1(x^*_\perp)} f(y)  }{2}  \right] \ind y,
\end{eqnarray*}
where in the last step, we used the property that $\psi_1>0$ a.e. on the cross section so that $\sqrt{\psi_1(y_\perp)}=\overline{\sqrt{\psi_1(y_\perp)}}$.
The proof is then completed by recalling the definition of $S$, $T$, and $S^*$ in \eqref{operator S},  \eqref{operator T}, and  \eqref{operator S*} respectively.
\end{proof}

In the following, we study the properties of the factorized operators which are needed to investigate the factorization method in the next section.
\begin{theorem} \label{theorem T coercivity}
The operator $T: L^2(D) \to L^2(D)$ given by  \eqref{operator T} is self adjoint and coercive, and
$$
|\langle Th,h\rangle_{L^2(D)}| \ge c\|h\|^2_{L^2(D)},
$$
for some positive constant $c$.
\end{theorem}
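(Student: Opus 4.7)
The plan is to observe that $T$ is simply a multiplication operator with multiplier
\[
m(y) := \tfrac{1}{2}\, e^{i\theta}\,\psi_1(x^*_\perp)\, f(y),
\]
and then read off both self-adjointness and coercivity directly from the pointwise properties of $m$. The two ingredients I will use are (i) that $\psi_1$ was chosen real-valued, with $\psi_1(x^*_\perp)\neq 0$ (indeed $\psi_1>0$ on the interior of $\Sigma$, as recorded in the explicit formulas just above the definition of the radiation condition), and (ii) the standing assumption on the source that either $e^{i\theta}f(y)>c_1>0$ or $e^{i\theta}f(y)<c_2<0$ a.e.\ on $D$.

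First, I would show that $m$ is real-valued a.e.\ on $D$. The factor $\psi_1(x^*_\perp)$ is a nonzero real number, and $e^{i\theta}f$ is real by the sign assumption. Hence $m\in L^\infty(D;\R)$, and for any $g,h\in L^2(D)$,
\[
\langle T h,g\rangle_{L^2(D)}=\int_D m(y)\,h(y)\,\overline{g(y)}\,\ind y=\int_D h(y)\,\overline{m(y)g(y)}\,\ind y=\langle h,Tg\rangle_{L^2(D)},
\]
which proves self-adjointness.

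Next, for coercivity, I compute
\[
\langle Th,h\rangle_{L^2(D)}=\tfrac{1}{2}\,\psi_1(x^*_\perp)\int_D e^{i\theta}f(y)\,|h(y)|^2\,\ind y,
\]
which is a real number with a definite sign (positive in the $e^{i\theta}f>c_1$ case, negative in the other case, assuming $\psi_1(x^*_\perp)>0$; if $\psi_1(x^*_\perp)<0$ the sign is merely flipped). Taking absolute values and using the uniform bound $|e^{i\theta}f(y)|\ge \min(c_1,|c_2|)$ a.e., I obtain
\[
|\langle Th,h\rangle_{L^2(D)}|\ge \tfrac{1}{2}\,|\psi_1(x^*_\perp)|\,\min(c_1,|c_2|)\,\|h\|_{L^2(D)}^2,
\]
so one may take $c=\tfrac{1}{2}|\psi_1(x^*_\perp)|\min(c_1,|c_2|)>0$.

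There is essentially no hard step here; the only thing that has to be remembered is why $m$ is real-valued and bounded below in absolute value, which is precisely what the sign condition on $e^{i\theta}f$ was introduced for. This is also why the author emphasized in the setup that $\psi_1(x^*_\perp)\neq 0$: without it the coercivity constant would collapse.
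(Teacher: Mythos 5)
Your proof is correct and follows essentially the same route as the paper's: both reduce the claim to the observation that $T$ is multiplication by a real-valued, sign-definite, bounded multiplier, compute $\langle Th,h\rangle_{L^2(D)}=\tfrac{1}{2}\psi_1(x^*_\perp)\int_D e^{i\theta}f\,|h|^2\,\ind y$, and conclude using the sign condition on $e^{i\theta}f$ together with $\psi_1(x^*_\perp)\neq 0$. Your version merely makes the self-adjointness computation and the explicit coercivity constant more detailed than the paper does.
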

\begin{proof}
Recall that $f\in L^\infty(D)$ with $e^{i\theta}f>c_1>0$ or $e^{i\theta}f<c_2<0$ with some $\theta \in [0,2\pi)$, then $T$ is self adjoint; moreover from the definition of $T$ in \eqref{operator T}
\begin{eqnarray*}
\left|\langle Th,h\rangle_{L^2(D)} \right| &=& \left|\int_D \frac{e^{i\theta}f(y) { \psi_1(x^*_\perp) }|h(y)|^2  }{2}  \ind y \right|,
\end{eqnarray*}
then we can directly prove the theorem by noting that $e^{i\theta}f>c_1>0$ or $e^{i\theta}f<c_2<0$ {(and note that  $\psi_1(x^*_\perp) \not =0$ as the measurements are non-vanishing)}. This completes the proof.
\end{proof}

\begin{lemma} \label{lemma S* compact}
The operator $S^*: L^2(D) \to L^2(k_-,k_+)$ given by  \eqref{operator S*}  is compact and has dense range in $L^2(k_-,k_+)$.
\end{lemma}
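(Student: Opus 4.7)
The plan is to treat compactness and dense range separately, since each is essentially a standard fact once the kernel structure is identified.

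For compactness, I would observe that $S^*$ is an integral operator whose kernel
$$k(\sigma,y) = \overline{\sqrt{\psi_1(y_\perp)}}\, e^{i\sigma(y_1 - x^*_1)}$$
is measurable and uniformly bounded in modulus by $\sqrt{\|\psi_1\|_\infty}$. Since $D$ is bounded (it is a Lipschitz domain, hence has finite measure) and $(k_-,k_+)$ is bounded, the kernel lies in $L^2((k_-,k_+) \times D)$. Therefore $S^*$ is a Hilbert--Schmidt operator, and in particular compact.

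For dense range, I would use the standard duality $\overline{\mathrm{Range}(S^*)}^{\perp} = \mathrm{Ker}(S)$, so it suffices to prove that $S$ is injective. Suppose $S\varphi = 0$ in $L^2(D)$ for some $\varphi \in L^2(k_-,k_+)$. From the definition \eqref{operator S},
$$\sqrt{\psi_1(y_\perp)} \int_{k_-}^{k_+} e^{-i\sigma(y_1 - x^*_1)} \varphi(\sigma)\, \mathrm{d}\sigma = 0 \quad \text{for a.e.\ } y \in D.$$
Since $\psi_1 > 0$ almost everywhere on the cross section (as read off from the explicit expressions for $\psi_1$), the factor $\sqrt{\psi_1(y_\perp)}$ is nonzero a.e., so the inner integral must vanish for $y_1$ in a set $\pi_1(D) \subset \mathbb{R}$ of positive one-dimensional measure (indeed containing an interval, as $D$ is a Lipschitz domain with nonempty interior).

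The remaining step is the key one: the function
$$\Phi(z) := \int_{k_-}^{k_+} e^{-iz\sigma}\, \varphi(\sigma)\, \mathrm{d}\sigma, \qquad z \in \mathbb{C},$$
is entire (of exponential type) by the Paley--Wiener theorem, since $\varphi$ has compact support in $[k_-,k_+]$. Because $\Phi(y_1 - x^*_1) = 0$ on an interval of real values $y_1$, analyticity forces $\Phi \equiv 0$ on $\mathbb{C}$. The injectivity of the Fourier transform on $L^2$ then yields $\varphi = 0$, establishing that $S$ is injective and hence that $S^*$ has dense range. The main subtlety to keep in mind is simply the justification that $\psi_1$ does not vanish on a set of positive measure in $\Sigma$ and that the projection of $D$ onto the $x_1$-axis contains an interval; both are immediate from the explicit form of $\psi_1$ and the Lipschitz-domain assumption on $D$.
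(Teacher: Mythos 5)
Your proof is correct; the dense-range half follows essentially the paper's own route, while the compactness half takes a genuinely different (equally standard) one. For compactness, the paper argues that $S^*h \in H^1(k_-,k_+)$ by differentiating under the integral sign and then invokes the compact embedding $H^1(k_-,k_+)\hookrightarrow L^2(k_-,k_+)$; you instead note that the kernel $\overline{\sqrt{\psi_1(y_\perp)}}\,e^{i\sigma(y_1-x^*_1)}$ lies in $L^2\big((k_-,k_+)\times D\big)$ because both $D$ and $(k_-,k_+)$ have finite measure and $\psi_1$ is bounded, so $S^*$ is Hilbert--Schmidt. The Hilbert--Schmidt argument is arguably cleaner (no need to justify differentiation under the integral or recall Rellich), and it even gives the stronger quantitative conclusion that the singular values are square-summable; the paper's $H^1$ route generalizes more readily to situations where the kernel is smooth in $\sigma$ but not square-integrable over the spatial domain. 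For the dense range, both you and the paper reduce to injectivity of $S$ via $\overline{\mathrm{Range}(S^*)}^{\perp}=\mathrm{Ker}(S)$ and identify $S\varphi$ (after stripping the nonvanishing factor $\sqrt{\psi_1(y_\perp)}$) with the Fourier transform of $\varphi\chi_{(k_-,k_+)}$ evaluated at $y_1-x^*_1$; the paper then asserts rather tersely that vanishing of this transform for $y_1\in D_1^+$ forces $\varphi=0$, whereas you supply the missing justification explicitly via Paley--Wiener: the transform of a compactly supported $L^2$ function is entire, so vanishing on a set of real points with an accumulation point (here an interval, or even just a set of positive measure) forces it to vanish identically. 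That added detail is a genuine improvement in rigor over the paper's wording, and the remaining care you flag --- that $\psi_1>0$ a.e.\ and that $D_1^+$ contains an interval --- is exactly the right thing to check.
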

\begin{proof}
From the definition \eqref{operator S*} of $S^*$, for any $h \in L^2(D)$ we have that
\begin{equation*} 
(S^* h)(\sigma) :=  \int_D  \overline{\sqrt{\psi_1(y_\perp)}}e^{i \sigma (y_1-x^*_1)} h(y)  \ind y, \quad \sigma \in (k_-,k_+),
\end{equation*}
and since $h\in L^2(D)$, one can directly derive that $(S^*h)(\sigma) \in H^1(k_-,k_+)$ by taking the derivative of $(S^* h)(\sigma)$ with respect to $\sigma$. Then we can conclude the compactness of $S^*: L^2(D) \to L^2(k_-,k_+)$ from the compact embedding from $H^1(k_-,k_+)$ into $L^2(k_-,k_+)$.  

{

To show the dense range of $S^*$ in $L^2(k_-,k_+)$, it is sufficient to show that $S$ is injective. We prove this by contradiction. Indeed if there exists $\varphi \in L^2(k_-,k_+)$ such that $S\varphi=0$, i.e.,
$$
\int_{k_-}^{k_+} \sqrt{\psi_1(y_\perp)}e^{-i \sigma (y_1-x^*_1)} \varphi(\sigma)  \ind \sigma =0, \quad \forall y \in D,
$$
note that $\psi_1\not=0$, then we can derive that 
$$
\mathcal{F}\{ \varphi \chi_{(k_-,k_+)} \} (y_1-x^*_1) =0, \quad \forall y_1 \in D_1^+,
$$
where $\mathcal{F}$ is the Fourier transform $\mathcal{F}:\varphi \chi_{(k_-,k_+)}  \to \int_{\mathbb{R}} \varphi(y) \chi_{(k_-,k_+)}(y)  e^{-is  y} \ind y$,  $D^+_1:=\{y_1: \exists\, y_\perp^* \mbox{ s.t. } (y_1,y_\perp^*) \in D\}$ and $\chi_{(k_-,k_+)}$ is the  characteristic function such that $\chi_{(k_-,k_+)}(x)=1$ if $x\in (k_-,k_+)$ and $\chi_{(k_-,k_+)}(x)=0$ if $x \not\in (k_-,k_+)$. This implies that $\varphi \chi_{(k_-,k_+)}$ vanishes and thereby  $\varphi$ vanishes in $L^2(k_-,k_+)$. This proves that $S$ is injective and hence  $S^*$   has dense range in $L^2(k_-,k_+)$. This completes the proof.
}
\end{proof}

\section{The sampling methods} \label{section sampling methods}
In this section, we study the  sampling methods \cite{cakoni2016qualitative,cakoni2016inverse,cakoni2011linear,colton2012inverse,kirsch2008factorization} where the goal is to lay the  theoretical foundation for extended sources and design efficient imaging algorithms with the single mode multi-frequency measurements. The factorization method  \cite{Kirsch98} has established a mathematically rigorous characterization of extended scatterers in the classical setting and has been applied to the inverse source problem in the homogeneous whole space \cite{GriesmaierSchmiedecke-source} with sparse multi-frequency measurements. There have been relevant work on the factorization method in waveguide with a single frequency, yet we aim to deploy the factorization method with a single propagating mode at multiple frequencies. Based on the multi-frequency operator and its properties derived in Section \ref{section operator}, the factorization method is to be established in Section \ref{section factorization method}. Moreover, the factorization of the multi-frequency operator may also lead to a factorization-based sampling method, as demonstrated in waveguide imaging in the single frequency case \cite{borcea2019factorization}
 and the inverse source problem in the homogeneous space \cite{LiuMeng2021} with sparse multi-frequency measurements. 
 In Section \ref{section factorization based sampling method} the factorization-based sampling method is to be analyzed. \subsection{The factorization method}\label{section factorization method}
In this section, we shall investigate the factorization method which gives a mathematically rigorous characterization of the range support of the source. To begin with, denote by
\begin{equation} \label{def psiz epsilon}
\psi_z^\epsilon(\sigma) := \frac{1}{|B(z,\epsilon)|}\int_{B(z,\epsilon)}e^{i\sigma(y_1-x^*_1)}  \ind y,
\end{equation}
where $B(z,\epsilon)$ is a circle  centered at $z$ with radius $\epsilon$, and $|B(z,\epsilon)|$ denotes its area.

We first give a characterization of the range of $S^*$. Let $D^+:=\{(y_1,y_\perp)\in W: \exists\, y_\perp^* \mbox{ s.t. } (y_1,y_\perp^*) \in D\}$, and recall that $D^+_1=\{y_1: \exists\, y_\perp^* \mbox{ s.t. } (y_1,y_\perp^*) \in D\}$.  For any domain $\Omega$, denote by $\chi_\Omega$ the {characteristic} function such that $\chi_\Omega(x)=1$ if $x\in \Omega$ and $\chi_\Omega(x)=0$ if $x \not\in \Omega$. For any point $z=(z_1,z_\perp) \in W$, we call $z_1$ the range coordinate and $z_\perp$ the cross-section coordinate.
\begin{lemma} \label{lemma range S*}
Let $S^*: L^2(D) \to L^2(k_-,k_+)$ be given by \eqref{operator S*} and $\psi_z^\epsilon$ be given by \eqref{def psiz epsilon} respectively.
For any sampling point  in $W$ with range coordinate $z_1$, it holds that
\begin{enumerate}
\item
If $z_1 \in D^+_1$, then there exists a positive $\epsilon$ depending on $z_1$ such that for any $z$ with range coordinate $z_1$,  $\psi_{z}^\epsilon \in Range(S^*)$. 
\item
If $z_1 \not \in D^+_1$, then for any sufficiently small $\epsilon>0$ with {$[z_1-\epsilon,z_1+\epsilon]  \cap \overline{D_1^+}=\emptyset$} and any $z$ with range coordinate $z_1$, $\psi_z^\epsilon \not\in Range(S^*)$.
\end{enumerate}
\end{lemma}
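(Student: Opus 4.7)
The starting point is that $\psi_z^\epsilon(\sigma)$ depends on $z$ only through the range coordinate: since the exponential in \eqref{def psiz epsilon} is independent of $y_\perp$ and $B(z,\epsilon)$ is symmetric about $z$, a direct slicing computation yields
\[
\psi_z^\epsilon(\sigma) \;=\; e^{-i\sigma x_1^*}\int_{z_1-\epsilon}^{z_1+\epsilon} e^{i\sigma y_1}\,\rho_{z_1}^\epsilon(y_1)\,dy_1, \qquad \rho_{z_1}^\epsilon(y_1):=\frac{2\sqrt{\epsilon^2-(y_1-z_1)^2}}{|B(z,\epsilon)|},
\]
with $\rho_{z_1}^\epsilon$ strictly positive on $(z_1-\epsilon,z_1+\epsilon)$. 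Since $D$ is an open Lipschitz domain and $z_1\in D_1^+$, I would fix some $(z_1,y_\perp^\star)\in D$ and choose $\epsilon,\delta>0$ so small that the cylinder $[z_1-\epsilon,z_1+\epsilon]\times(y_\perp^\star-\delta,y_\perp^\star+\delta)$ lies inside $D$ and strictly inside $\Sigma$ (where $\psi_1$ is bounded below). The preimage
\[
h(y_1,y_\perp) \;:=\; \rho_{z_1}^\epsilon(y_1)\,\phi(y_\perp)/\sqrt{\psi_1(y_\perp)},
\]
with $\phi\ge 0$ supported in $(y_\perp^\star-\delta,y_\perp^\star+\delta)$ and $\int\phi\,dy_\perp=1$, then lies in $L^\infty(D)\subset L^2(D)$, and a direct Fubini calculation from \eqref{operator S*} gives $S^*h=\psi_z^\epsilon$. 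Because the construction never uses $z_\perp$, the same preimage works for every $z$ with range coordinate $z_1$.

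\textbf{Plan for (2), the non-realizability direction.}
I would argue by contradiction: assume $\psi_z^\epsilon=S^*h$ for some $h\in L^2(D)$ and introduce the slab density $H(y_1):=\int_{D_{y_1}}\sqrt{\psi_1(y_\perp)}\,h(y_1,y_\perp)\,dy_\perp$, which by Cauchy--Schwarz is an $L^1$ function supported in $\overline{D_1^+}$. Cancelling the common factor $e^{-i\sigma x_1^*}$ and extending by zero to $\R$, the identity $\psi_z^\epsilon=S^*h$ becomes
\[
\int_{\R} e^{i\sigma y_1}\bigl(H\chi_{D_1^+}\bigr)(y_1)\,dy_1 \;=\; \int_{\R} e^{i\sigma y_1}\bigl(\rho_{z_1}^\epsilon\chi_{[z_1-\epsilon,z_1+\epsilon]}\bigr)(y_1)\,dy_1, \qquad \sigma\in(k_-,k_+).
\]
Both sides are entire functions of $\sigma$ of exponential type (Paley--Wiener), so agreement on the open interval $(k_-,k_+)$ propagates to all of $\R$; Fourier inversion on $L^2(\R)$ then forces $H\chi_{D_1^+}=\rho_{z_1}^\epsilon\chi_{[z_1-\epsilon,z_1+\epsilon]}$ almost everywhere. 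The hypothesis $[z_1-\epsilon,z_1+\epsilon]\cap\overline{D_1^+}=\emptyset$ makes the two supports disjoint, which would force $\rho_{z_1}^\epsilon\equiv 0$ on $(z_1-\epsilon,z_1+\epsilon)$ and contradict its strict positivity there.

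\textbf{Where it could go wrong.}
The most delicate step in (1) is keeping the preimage in $L^2(D)$, since $1/\sqrt{\psi_1(y_\perp)}$ blows up near $\partial\Sigma$ in the Dirichlet and mixed cases; confining $\operatorname{supp} h$ to a cylinder strictly interior to $\Sigma$ resolves this. In (2) the analytic-continuation step is the crux --- without it, equality of Fourier transforms merely on the bounded window $(k_-,k_+)$ would not be enough to conclude anything about the underlying supports; it is precisely the compact support of the two densities that restores Paley--Wiener and hence the needed rigidity.
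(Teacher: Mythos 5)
Your proposal is correct and follows essentially the same route as the paper: an explicit preimage supported in a small neighbourhood of a point $(z_1,y_\perp^\star)\in D$ (the paper uses $\chi_{B(z^*,\epsilon)}/(|B(z^*,\epsilon)|\,\overline{\sqrt{\psi_1}})$ on a ball, you use a product density on a thin cylinder; both yield the same $z_\perp$-independence of $\psi_z^\epsilon$) for part (1), and for part (2) a reduction to equality of one-dimensional Fourier transforms of compactly supported densities followed by a support contradiction, where your slab density $H(y_1)$ is exactly the paper's Radon transform $\mathcal{R}^{e_1}\{\chi_D\widetilde{g}\}$ obtained via the Fourier slice theorem. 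The only difference of note is that you make explicit the Paley--Wiener/analytic-continuation step extending the identity from $(k_-,k_+)$ to all of $\R$, which the paper passes over with ``then we can derive that''; this is a welcome clarification rather than a new argument.
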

\begin{proof}
(1). If ${z_1} \in D^+_1$, then there must exist $\epsilon>0$ and $z^*=(z_1,z^*_\perp) \in D$ such that $B(z^*,\epsilon) \in D$. Let $h(y)=  \frac{\chi_{B(z^*,\epsilon)}(y) }{|B(z^*,\epsilon)|} \frac{1}{\overline{\sqrt{\psi_1(y_\perp)}}} $, then $h\in L^2(D)$ and
it is directly verified that 
\begin{equation*}
(S^* h)(\sigma)  =  \int_D   \frac{\chi_{B(z^*,\epsilon)}(y) }{|B(z^*,\epsilon)|}   e^{i \sigma (y_1-x^*_1)}  \ind y = \psi_{z^*}^\epsilon (\sigma).
\end{equation*}
This proves that $\psi_{z^*}^\epsilon \in Range(S^*)$. Now a direct calculation
{
\begin{eqnarray*}
\psi_{z^*}^\epsilon(\sigma) &=& \frac{1}{|B(z^*,\epsilon)|}\int_{B(z^*,\epsilon)}e^{i\sigma(y_1-x^*_1)}  \ind y = \frac{1}{ \epsilon^2}\int_{z_1-\epsilon}^{z_1+\epsilon} \int_{z^*_\perp-\sqrt{\epsilon^2-(y_1-z_1)^2}}^{z^*_\perp+\sqrt{\epsilon^2-(y_1-z_1)^2}}     e^{i\sigma(y_1-x^*_1)}  \ind y_\perp  \ind y_1 \\
&=&  \frac{1}{\epsilon^2}\int_{z_1-\epsilon}^{z_1+\epsilon}   2 \sqrt{\epsilon^2-(y_1-z_1)^2}   e^{i\sigma(y_1-x^*_1)}   \ind y_1
\end{eqnarray*}}
yields that $\psi_{z^*}^\epsilon$ is indeed independent of the cross-section coordinate $z^*_\perp$, i.e. for any $z=(z_1,z_\perp)$ and $z^*=(z_1,z^*_\perp)$, $\psi_{z}^\epsilon= \psi_{z^*}^\epsilon \in Range(S^*)$. This proves the first part.

(2). If $z_1 \not \in {D_1^+}$, for any sufficiently small $\epsilon>0$ with $[z_1-\epsilon,z_1+\epsilon]  \cap \overline{D_1^+}=\emptyset$, assume on the contrary that $\psi_{z}^\epsilon \in Range(S^*)$ for a $z=(z_1,z_\perp)$. Then there exists a $g \in L^2(D)$ such that
\begin{equation*}
\int_D  \overline{\sqrt{\psi_1(y_\perp)}} e^{i \sigma (y_1-x^*_1)} g(y)  \ind y = (S^* g)(\sigma)  = \psi_{z}^\epsilon (\sigma) =  \frac{1}{|B(z,\epsilon)|}\int_{B(z,\epsilon)}e^{i\sigma(y_1-x^*_1)}  \ind y,
\end{equation*}
for any $\sigma \in (k_-,k_+)$. Noting the common term $e^{i\sigma x^*_1}$, the above equation is reduced to 
\begin{equation} \label{lemma range S* proof eqn1}
\int_{\mathbb{R}^d}  e^{i \sigma y_1} \chi_{D} (y)\widetilde{g}(y)  \ind y =  \int_{\mathbb{R}^d}   e^{i\sigma y_1}    \frac{\chi_{B(z,\epsilon)}(y)}{|B(z,\epsilon)|} \ind y, \qquad \forall \sigma \in (k_-,k_+)
\end{equation}
where $ \widetilde{g}(y)  :=\overline{\sqrt{\psi_1(y_\perp)}}  g(y)$ in $D$. 


By recalling the multi-dimensional Fourier transform $\mathcal{F}: L^2(\mathbb{R}^d) \to L^2(\mathbb{R}^d)$
\begin{equation}
 \mathcal{F}\{u\} (s) = \int_{\mathbb{R}^d} u(y) e^{-is\cdot y} \ind y, \quad s\in \mathbb{R}^d, \quad \forall u \in L^2(\mathbb{R}^d),
\end{equation}
and the {Radon} transform $\mathcal{R}^e: L^2(\mathbb{R}^d) \to L^1(\mathbb{R} )$ for a unit direction $e\in \mathbb{R}^d$ that
\begin{equation}
 \mathcal{R}^e \{u\} (t')  =\int_{e^\perp} u(t'e+{y'})  \ind y', \quad t' \in \mathbb{R} , \quad \forall u \in L^2(\mathbb{R}^d),
\end{equation}
where $e^\perp$ represents the hyperplane orthogonal to $e$. {From the Fourier slice theorem \cite[Theorem 1.1]{natterer2001book}}
\begin{equation}
 \mathcal{F}\{u\} (te) = \widehat{ \mathcal{R}^e}\{u\}(t),
\end{equation}
where $~\widehat{}~$ denotes the one-dimensional Fourier transform.

Now we can derive from \eqref{lemma range S* proof eqn1} that
\begin{equation*} 
 \mathcal{F}\left\{ \chi_{D} \widetilde{g} \right\} ( -\sigma e_1) = \mathcal{F}\left\{ \frac{\chi_{B(z,\epsilon)} }{|B(z,\epsilon)|} \right\} (-\sigma e_1), \quad e_1 = (1,0,\cdots), \qquad \forall \sigma \in (k_-,k_+).
\end{equation*}
and thereby 
\begin{equation}\label{lemma range S* proof eqn3}
\widehat{ \mathcal{R}^{e_1}}\left\{ \chi_{D}  \widetilde{g} \right\}  (-\sigma)= \widehat{\mathcal{R}^{e_1}}\left\{ \frac{\chi_{B(z,\epsilon)} }{|B(z,\epsilon)|} \right\}  (-\sigma), \qquad \forall \sigma \in (k_-,k_+).
\end{equation}
Then we can derive that
\begin{equation*}
\mathcal{R}^{e_1}\left\{ \chi_{D}  \widetilde{g} \right\}  =  \mathcal{R}^{e_1}\left\{ \frac{\chi_{B(z,\epsilon)} }{|B(z,\epsilon)|} \right\}  .
\end{equation*}
However $~\mbox{supp}~\mathcal{R}^{e_1}\left\{ \chi_{D}  \widetilde{g} \right\} \subset e_1\cdot D = D^+_1$ and $~\mbox{supp}~\mathcal{R}^{e_1}\left\{ \frac{\chi_{B(z,\epsilon)} }{|B(z,\epsilon)|} \right\} \subset e_1\cdot B(z,\epsilon) = (z_1-\epsilon,z_1+\epsilon)$ yields  a contradiction (since $[z_1-\epsilon,z_1+\epsilon]  \cap \overline{D_1^+}=\emptyset$). This prove the second part. These prove the Lemma.
\end{proof}

As can be seen from Lemma \ref{lemma range S*} part $(1)$, the $\epsilon$ depends on the  sampling range coordinate $z_1$. The following corollary removes this dependence with an extra geometric assumption on $D$.

\begin{corollary} \label{corollary range S*}
Assume in addition that the minimal width of $D$ is bounded below by a positive constant, i.e., {$\min\{ L(y_1) : \forall y_1 \}>c>0$ where $L(y_1):={ \min_{y_\perp}\max_{y_\perp^*}}\{|y_\perp - y_\perp^*|: (y_1, \beta y_\perp+(1-\beta) y_\perp^*) \in D, \forall \beta \in[0,1] \}$}. For any sampling point in $W$ with range coordinate $z_1$, it holds that
\begin{enumerate}
\item If $z_1 \in D^+_1$, { let $\epsilon_1>0$ be any given small tolerance level, then there exists a positive $\epsilon_0$ such that for any $z$ with $(z_1-\epsilon_1,z_1+\epsilon_1) \subset D_1^+$ and any positive $\epsilon<\epsilon_0$,} 
$\psi_{z}^\epsilon \in Range(S^*)$. 

\item If $z_1 \not \in D_1^+$, then for any sufficiently small $\epsilon>0$ with $[z_1-\epsilon,z_1+\epsilon]  \cap \overline{D_1^+}=\emptyset$ and any $z$ with range coordinate $z_1$, $\psi_z^\epsilon \not\in Range(S^*)$.
\end{enumerate}
\end{corollary}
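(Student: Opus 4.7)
The plan is to reduce Corollary \ref{corollary range S*} to Lemma \ref{lemma range S*} by using the uniform minimal width hypothesis to convert the pointwise radius $\epsilon$ of Lemma \ref{lemma range S*} into one that is independent of the range coordinate $z_1$. Part (2) of the corollary is verbatim Part (2) of Lemma \ref{lemma range S*}, so only Part (1) needs a new argument.

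The core geometric step is to show that there exists $\epsilon_0>0$, depending only on $c$, $\epsilon_1$, and the Lipschitz constant of $\partial D$, such that for every $z_1$ with $(z_1-\epsilon_1,z_1+\epsilon_1)\subset D_1^+$ one can find $z^*_\perp$ with $B((z_1,z^*_\perp),\epsilon_0)\subset D$. The hypothesis $L(y_1)\ge c>0$ provides, at each height $y_1$, a cross-section segment in $D$ of length at least $c$. Since the range interval $[z_1-\epsilon_0,z_1+\epsilon_0]$ lies strictly inside $D_1^+$ (as soon as $\epsilon_0\le\epsilon_1$), and $\partial D$ is Lipschitz, these segments vary continuously enough in $y_1$ to overlap on a common sub-interval whose length stays bounded below by a constant multiple of $c$; choosing $z^*_\perp$ at the midpoint of this common sub-interval and setting $\epsilon_0:=\min(\epsilon_1,c/4)$ secures the required ball. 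A cleaner alternative route is to observe that $\{z_1\in\R:(z_1-\epsilon_1,z_1+\epsilon_1)\subset D_1^+\}$ is bounded (since $D$ is bounded), apply Lemma \ref{lemma range S*} Part (1) pointwise to obtain a local radius at each such $z_1$, and then use the Lipschitz regularity of $\partial D$ to see that the resulting map $z_1\mapsto\epsilon(z_1)$ can be taken lower semicontinuous on this bounded set, hence bounded below by some $\epsilon_0>0$.

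Once $B(z^*,\epsilon_0)\subset D$ is in hand with $z^*=(z_1,z^*_\perp)$, the rest of the proof copies the proof of Lemma \ref{lemma range S*} Part (1). I would set
\[
h(y):=\frac{\chi_{B(z^*,\epsilon_0)}(y)}{|B(z^*,\epsilon_0)|}\cdot\frac{1}{\overline{\sqrt{\psi_1(y_\perp)}}}\in L^2(D),
\]
verify by direct computation that $(S^*h)(\sigma)=\psi^{\epsilon_0}_{z^*}(\sigma)$, and invoke the already-established fact that $\psi^{\epsilon_0}_z$ is independent of the cross-section coordinate to conclude $\psi^{\epsilon_0}_z=\psi^{\epsilon_0}_{z^*}\in\mathrm{Range}(S^*)$ for every $z=(z_1,z_\perp)$. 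Repeating the construction with any smaller concentric ball $B(z^*,\epsilon)\subset B(z^*,\epsilon_0)\subset D$ yields $\psi^\epsilon_z\in\mathrm{Range}(S^*)$ for every $0<\epsilon<\epsilon_0$, as required.

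The main obstacle is the uniform geometric claim: the minimal width hypothesis, as written, only controls chord lengths at each fixed height $y_1$, whereas fitting a ball of uniform radius requires threading these chords by a common interval as $y_1$ varies within $[z_1-\epsilon_0,z_1+\epsilon_0]$. The Lipschitz regularity of $\partial D$ provides the needed modulus of continuity, but one must track carefully that $\epsilon_0$ depends only on $c$, $\epsilon_1$, and the Lipschitz constant, and never on $z_1$; otherwise the corollary collapses back to Lemma \ref{lemma range S*}. The compactness detour mentioned above is the cleanest way to sidestep this bookkeeping.
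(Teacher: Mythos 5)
Your proposal is correct and follows essentially the same route as the paper: part (2) is unchanged from Lemma \ref{lemma range S*}, and part (1) uses the minimal-width hypothesis to produce a ball $B(z^*,\epsilon)\subset D$ of uniform radius centered at height $z_1$, after which the same test function $h=\chi_{B(z^*,\epsilon)}/\big(|B(z^*,\epsilon)|\,\overline{\sqrt{\psi_1}}\big)$ and the $z_\perp$-independence of $\psi_z^\epsilon$ finish the argument. If anything you are more careful than the paper, which simply asserts the existence of such a ball with $\epsilon_0=\min\{c/4,\epsilon_1/4\}$; your observation that the per-height chord condition alone does not yield a uniformly inscribed ball without invoking the Lipschitz regularity of $\partial D$ (or, more cleanly, a compactness/lower-semicontinuity argument on the $1$-Lipschitz inradius function $z_1\mapsto\epsilon(z_1)$) addresses a point the paper leaves implicit.
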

\begin{proof}
The only difference is in part (1). We highlight the difference. Assume that $z_1 \in D^+_1$. Since the minimal width of $D$ is bounded below by a positive constant, i.e., ${  \min_{y_\perp}\max_{y_\perp^*}}\{|y_\perp - y_\perp^*|: (y_1, \beta y_\perp+(1-\beta) y_\perp^*) \in D, \forall \beta \in[0,1] \}>c>0$, then for any range coordinate $z_1 \in D^+_1$ { with $(z_1-\epsilon_1,z_1+\epsilon_1) \subset D$}, there must exist $z^*=(z_1,z^*_\perp) \in D$ such that $B(z^*,\epsilon) \in D$ for all $0<\epsilon < { \min\{c/4,\epsilon_1/4\}}$.
Now let $h(y)=  \frac{\chi_{B(z^*,\epsilon)}(y) }{|B(z^*,\epsilon)|}  \frac{1}{\overline{\sqrt{\psi_1(y_\perp)}}} $, then $h\in L^2(D)$ and
it is directly verified that 
\begin{equation*}
(S^* h)(\sigma)  =  \int_D  \frac{\chi_{B(z^*,\epsilon)}(y) }{|B(z^*,\epsilon)|}  e^{i \sigma (y_1-x^*_1)}  \ind y = \psi_{z^*}^\epsilon (\sigma).
\end{equation*}
This proves that $\psi_{z^*}^\epsilon \in Range(S^*)$. Using the same argument before, we prove the corollary.
\end{proof}

Applying Theorem \ref{theorem T coercivity} and Lemma \ref{lemma S* compact}, the following Lemma is a direct application of \cite[Corollary 1.22]{kirsch2008factorization}.
\begin{lemma}
Let $F:L^2(k_-,k_+) \to L^2(k_-,k_+)$ be given by \eqref{definition far-field operator} and  $S^*: L^2(D) \to L^2(k_-,k_+)$ be given by \eqref{operator S*} respectively. It holds that
$Range(S^*)=Range(F^{1/2})$.
\end{lemma}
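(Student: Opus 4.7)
The plan is to invoke Corollary~1.22 of \cite{kirsch2008factorization}, a general range-identity theorem for operators admitting a coercive symmetric factorization, and simply to verify that each of its hypotheses is already available in the preceding part of this section. That abstract result states: if $F$ is a compact, positive, self-adjoint operator on a Hilbert space admitting a factorization $F = A^* T A$ with $A$ compact and with dense range, and with $T$ bounded, self-adjoint, and coercive, then $\mathrm{Range}(A^*) = \mathrm{Range}(F^{1/2})$.

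First I would identify $A$ with $S$, so that Theorem \ref{theorem factorization S*TS} supplies exactly the required symmetric decomposition $F = S^* T S$. Second, I would appeal to Theorem \ref{theorem T coercivity} for the self-adjointness and coercivity of $T$ on $L^2(D)$. Third, Lemma \ref{lemma S* compact} gives compactness and dense range of $S^*$; dense range of $S^*$ is equivalent to injectivity of $S$ and compactness of $S^*$ entails compactness of $S$, so the assumptions on $A = S$ in the abstract corollary are met. Compactness of $F$ then follows by composition, while self-adjointness and positivity of $F$ follow from the identity $\langle F g, g\rangle_{L^2(k_-,k_+)} = \langle T S g, S g\rangle_{L^2(D)}$ together with the corresponding properties of $T$.

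The only bookkeeping point, which I expect to be the closest thing to an obstacle, concerns the sign of the definite form in Theorem \ref{theorem T coercivity}. Under the alternative assumption $e^{i\theta} f < c_2 < 0$, the operator $T$ is negative coercive, so $F$ is negative rather than positive and $F^{1/2}$ must be interpreted as $|F|^{1/2} = (-F)^{1/2}$. The cited corollary applies verbatim to $-F = S^*(-T)S$ and produces the same range identity, so once this convention is fixed the conclusion $\mathrm{Range}(S^*) = \mathrm{Range}(F^{1/2})$ is immediate.
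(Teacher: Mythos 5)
Your proposal is correct and is essentially the paper's own argument: the paper likewise obtains the range identity as a direct application of Corollary~1.22 of \cite{kirsch2008factorization}, with the hypotheses supplied by the factorization $F=S^*TS$ (Theorem~\ref{theorem factorization S*TS}), the self-adjointness and coercivity of $T$ (Theorem~\ref{theorem T coercivity}), and the compactness and dense range of $S^*$ (Lemma~\ref{lemma S* compact}). Your remark about interpreting $F^{1/2}$ as $|F|^{1/2}$ in the negative-coercive case is a sensible clarification of a convention the paper leaves implicit, but it does not change the route.
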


Now we can prove the following theorem. Let $(\phi_j,\alpha_j)_{j=1}^\infty$ be the eigensystem of the self-adjoint operator $F^{1/2}$ {and let $\langle \cdot , \cdot \rangle$ denote the inner product in $ L^2(k_-,k_+)$}.
\begin{theorem}\label{theorem FM main}
For any sampling point in $W$ with range coordinate $z_1$, it holds that
\begin{enumerate}
\item If $z_1 \in D^+_1$, then there exists a positive $\epsilon$ depending on $z_1$  such that for any $z$ with range coordinate $z_1$, $\psi_{z}^\epsilon \in Range(F^{1/2})$ and moreover
\begin{equation}\label{theorem FM eqn1}
\sum_{j=1}^\infty \frac{|\langle \psi_{z}^\epsilon,\phi_j \rangle|^2}{\alpha_j^2} < \infty.
\end{equation}
\item
If $z_1 \not \in D^+_1$, then for any sufficiently small $\epsilon>0$ with $[z_1-\epsilon,z_1+\epsilon]  \cap \overline{D_1^+}=\emptyset$ and any $z$ with range coordinate $z_1$, $\psi_z^\epsilon \not\in Range(F^{1/2})$ and moreover
\begin{equation}\label{theorem FM eqn2}
\sum_{j=1}^\infty \frac{|\langle {\psi_{z}^\epsilon},\phi_j \rangle|^2}{\alpha_j^2} = \infty.
\end{equation}
\end{enumerate}
\end{theorem}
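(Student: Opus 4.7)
The plan is to combine the range identity $\text{Range}(F^{1/2}) = \text{Range}(S^*)$ (stated in the Lemma immediately preceding the theorem) with the geometric characterization of $\text{Range}(S^*)$ provided by Lemma \ref{lemma range S*}, and to convert range membership into the stated series criterion via the classical Picard criterion for compact self-adjoint operators. All the substantive analytic ingredients---the symmetric factorization $F = S^*TS$ from Theorem \ref{theorem factorization S*TS}, the coercivity and self-adjointness of $T$ from Theorem \ref{theorem T coercivity}, the compactness and dense range of $S^*$ from Lemma \ref{lemma S* compact}, and the geometric characterization of $\text{Range}(S^*)$ from Lemma \ref{lemma range S*}---are already in place, so the proof of this theorem reduces to a clean assembly of these pieces.

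For part (1), I would invoke Lemma \ref{lemma range S*}(1) to produce an $\epsilon > 0$ depending on $z_1$ for which $\psi_z^\epsilon \in \text{Range}(S^*)$ whenever $z$ has range coordinate $z_1$, and then use the range identity to conclude $\psi_z^\epsilon \in \text{Range}(F^{1/2})$. Writing $\psi_z^\epsilon = F^{1/2}\varphi$ with $\varphi \in L^2(k_-,k_+)$ and expanding $\varphi$ in the orthonormal eigenbasis $\{\phi_j\}$ of $F^{1/2}$, the relation $F^{1/2}\phi_j = \alpha_j\phi_j$ yields $\langle \psi_z^\epsilon,\phi_j\rangle = \alpha_j\langle\varphi,\phi_j\rangle$, so the series in \eqref{theorem FM eqn1} collapses via Parseval's identity to $\|\varphi\|_{L^2(k_-,k_+)}^2 < \infty$.

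For part (2), I would apply Lemma \ref{lemma range S*}(2) to obtain $\psi_z^\epsilon \notin \text{Range}(S^*) = \text{Range}(F^{1/2})$ for every $z$ with range coordinate $z_1 \notin D_1^+$ and every sufficiently small $\epsilon$ with $[z_1-\epsilon,z_1+\epsilon]\cap\overline{D_1^+}=\emptyset$. Since $S^*$ has dense range, so does $F^{1/2}$, and thus $\psi_z^\epsilon$ lies automatically in the closure of $\text{Range}(F^{1/2})$. The Picard criterion then forces the series in \eqref{theorem FM eqn2} to diverge: otherwise the truncated sums $\sum_{j\le N}\alpha_j^{-1}\langle\psi_z^\epsilon,\phi_j\rangle\,\phi_j$ would form a Cauchy sequence in $L^2(k_-,k_+)$ converging to some $\varphi$ with $F^{1/2}\varphi = \psi_z^\epsilon$, contradicting $\psi_z^\epsilon \notin \text{Range}(F^{1/2})$.

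I do not anticipate a genuine obstacle in this proof, since the hard work has been front-loaded into the preceding lemmas. The only point that deserves a careful sentence is ensuring that $F^{1/2}$ is a well-defined self-adjoint compact operator on $L^2(k_-,k_+)$ with a discrete spectral decomposition, so that the Picard criterion applies in the stated form. This is a direct consequence of the structure $F = S^*TS$: the operator $T$ is multiplication by the real function $\tfrac{1}{2}e^{i\theta}\psi_1(x^*_\perp)f$ (using that $e^{i\theta}f$ is real and of fixed sign by hypothesis, and $\psi_1(x^*_\perp)\neq 0$), hence $F$ is sign-definite and $|F|^{1/2}$ is well-defined by functional calculus; with the convention of the statement, $(\phi_j,\alpha_j)$ is the corresponding spectral system.
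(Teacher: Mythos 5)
Your proposal is correct and follows essentially the same route as the paper: the paper's proof likewise combines the range identity $Range(S^*)=Range(F^{1/2})$ from the preceding lemma with Lemma \ref{lemma range S*} and the Picard criterion \cite[Theorem 4.8]{colton2012inverse}. Your additional remarks on the well-definedness of $F^{1/2}$ and the density of its range are consistent with the ingredients already established in Theorem \ref{theorem T coercivity} and Lemma \ref{lemma S* compact}.
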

\begin{proof}
Since $Range(S^*)= Range(F^{1/2})$. Then the proof is completed by applying Lemma \ref{lemma range S*} and Picard theorem \cite[Theorem 4.8]{colton2012inverse}.
\end{proof}

\begin{corollary} \label{corollary FM main}
Assume in addition that the minimal width of $D$ is bounded below by a positive constant, i.e.,   {$\min\{ L(y_1) : \forall y_1 \}>c>0$ where $L(y_1)={ \min_{y_\perp}\max_{y_\perp^*}}\{|y_\perp - y_\perp^*|: (y_1, \beta y_\perp+(1-\beta) y_\perp^*) \in D, \forall \beta \in[0,1] \}$}.  Then we have that
\begin{enumerate}
\item If $z_1 \in D^+_1$, 
{  let $\epsilon_1>0$ be any given small tolerance level, then there exists a positive $\epsilon_0$ such that for any $z$ with $(z_1-\epsilon_1,z_1+\epsilon_1) \subset D_1^+$ and any positive $\epsilon<\epsilon_0$,} 
$\psi_{z}^\epsilon \in Range(F^{1/2})$ and equation \eqref{theorem FM eqn1} holds.
\item If $z_1 \not \in D^+_1$, then for any sufficiently small $\epsilon>0$ with $[z_1-\epsilon,z_1+\epsilon]  \cap \overline{D_1^+}=\emptyset$ and any $z$ with range coordinate $z_1$, $\psi_z^\epsilon \not \in Range(F^{1/2})$ and equation \eqref{theorem FM eqn2} holds.
\end{enumerate}
\end{corollary}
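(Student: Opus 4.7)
The plan is to mirror the proof of Theorem \ref{theorem FM main} verbatim, substituting Corollary \ref{corollary range S*} (the uniform range characterization under the minimal-width hypothesis) for Lemma \ref{lemma range S*}. Concretely, I would first invoke the identity $\mathrm{Range}(S^*) = \mathrm{Range}(F^{1/2})$ established in the lemma immediately preceding Theorem \ref{theorem FM main}; this reduces both parts of the corollary to the corresponding statements about $\mathrm{Range}(S^*)$, and once membership (or non-membership) is known, the Picard criterion \cite[Theorem 4.8]{colton2012inverse} applied to the self-adjoint operator $F^{1/2}$ with eigensystem $(\phi_j,\alpha_j)$ turns the range statements into the convergence/divergence of the series \eqref{theorem FM eqn1} and \eqref{theorem FM eqn2}.

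For part (1), I would apply Corollary \ref{corollary range S*}(1) directly: under the minimal-width assumption, for any tolerance $\epsilon_1>0$ there is a uniform $\epsilon_0>0$ (depending only on the width lower bound $c$ and on $\epsilon_1$, not on the cross-section coordinate $z_\perp$) such that whenever $z=(z_1,z_\perp)$ satisfies $(z_1-\epsilon_1,z_1+\epsilon_1)\subset D_1^+$ and $0<\epsilon<\epsilon_0$, one has $\psi_z^\epsilon \in \mathrm{Range}(S^*)=\mathrm{Range}(F^{1/2})$. Picard's theorem then yields \eqref{theorem FM eqn1} immediately.

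For part (2), no new argument is required. The hypothesis on $z_1$ and $\epsilon$ is identical to that of Theorem \ref{theorem FM main}(2), which itself relies on Lemma \ref{lemma range S*}(2); since this part of Lemma \ref{lemma range S*} does not use the minimal-width assumption, Theorem \ref{theorem FM main}(2) carries over verbatim to give $\psi_z^\epsilon \notin \mathrm{Range}(F^{1/2})$ and the divergence \eqref{theorem FM eqn2}. The extra geometric hypothesis on $D$ is only needed to upgrade part (1).

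The main (and rather mild) obstacle is to verify that the $\epsilon_0$ provided by Corollary \ref{corollary range S*} is genuinely uniform in $z_\perp$, not just in $z_1$. This independence is already built into the corollary's construction: the minimal-width hypothesis guarantees a ball $B(z^*,\epsilon)\subset D$ of radius bounded below by $\min\{c/4,\epsilon_1/4\}$ for \emph{some} $z^*=(z_1,z_\perp^*)$ on every admissible slice, and the explicit computation in Lemma \ref{lemma range S*}(1) shows $\psi_{z}^\epsilon=\psi_{z^*}^\epsilon$ regardless of the cross-section coordinate. So the proof reduces to a short bookkeeping argument that cites three already-established facts: $\mathrm{Range}(S^*)=\mathrm{Range}(F^{1/2})$, Corollary \ref{corollary range S*}, and Picard's theorem.
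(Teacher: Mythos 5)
Your proposal is correct and follows exactly the route the paper intends: the corollary is obtained by combining Corollary \ref{corollary range S*} with the identity $Range(S^*)=Range(F^{1/2})$ and the Picard theorem, precisely as in the proof of Theorem \ref{theorem FM main}. Nothing further is needed.
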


\begin{remark}
Corollary \ref{corollary FM main} implies that the $\epsilon$ can be chosen as a fixed constant for all the sampling points $z$ when implementing the factorization method. In particular, we can choose $\epsilon$ as a  small number such that $\epsilon$ is less than a tolerance level that one is seeking for. If there is no such assumption as in Corollary \ref{corollary FM main}, we can still chose $\epsilon$ as a fixed tolerance level, and in this case, the part whose width is  very small  may not be reconstructed as indicated by Theorem \ref{theorem FM main} and Lemma \ref{lemma range S*} (and its proof). However since $\epsilon$ is the tolerance level, the factorization method theoretically would still yield a sufficiently good image.
\end{remark}
\subsection{The factorization-based sampling method} \label{section factorization based sampling method}
In this section, we investigate a factorization-based multi-frequency sampling method with  imaging function given by
\begin{equation}
I(z):=| \langle F \psi_z,\psi_z \rangle |, \quad \psi_z(\sigma) = e^{i\sigma(z_1-x^*_1)}.
\end{equation}
Its property is given by the following theorem.
\begin{theorem} \label{factorization-based SM theorem}
There exists positive constants $c_1$ and $c_2$ such that
\begin{equation}
c_1 \|S \psi_z\|^2_{L^2(D)}  \le |I(z)| \le c_2 \|S \psi_z\|^2_{L^2(D)}.
\end{equation}
\end{theorem}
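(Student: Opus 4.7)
The plan is to exploit the symmetric factorization $F = S^* T S$ established in Theorem \ref{theorem factorization S*TS}, together with the coercivity of $T$ from Theorem \ref{theorem T coercivity}, to sandwich $|I(z)|$ between two constant multiples of $\|S \psi_z\|_{L^2(D)}^2$. The first step is to move the operator $S$ across the inner product via the adjoint relation: since $F = S^* T S$,
\begin{equation*}
\langle F \psi_z, \psi_z \rangle_{L^2(k_-,k_+)} = \langle S^* T S \psi_z, \psi_z \rangle_{L^2(k_-,k_+)} = \langle T S \psi_z, S \psi_z \rangle_{L^2(D)},
\end{equation*}
so that $I(z) = |\langle T S \psi_z, S \psi_z \rangle_{L^2(D)}|$. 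This reduces the problem to bounding $|\langle T h, h \rangle_{L^2(D)}|$ both above and below by multiples of $\|h\|_{L^2(D)}^2$ at the specific vector $h := S \psi_z$.

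For the lower bound, I would invoke the coercivity of $T$ from Theorem \ref{theorem T coercivity}, which furnishes a constant $c > 0$ with $|\langle T h, h \rangle_{L^2(D)}| \ge c \|h\|_{L^2(D)}^2$ for every $h \in L^2(D)$; setting $c_1 := c$ and specializing to $h = S \psi_z$ yields the left inequality. For the upper bound, I would observe that $T$ is simply the pointwise multiplication operator $h \mapsto \tfrac{1}{2} e^{i\theta} \psi_1(x^*_\perp) f \cdot h$, and since $f \in L^\infty(D)$ and $\psi_1(x^*_\perp)$ is a fixed nonzero real constant, $T : L^2(D) \to L^2(D)$ is bounded. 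Cauchy--Schwarz then gives $|\langle T h, h \rangle_{L^2(D)}| \le \|T\| \, \|h\|_{L^2(D)}^2$, so one may take $c_2 := \|T\|$.

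There is no genuine obstacle in this argument: the substantive content already resides in Theorems \ref{theorem factorization S*TS} and \ref{theorem T coercivity}, and what remains is a short manipulation of the sesquilinear form combined with the elementary boundedness of a multiplication operator. The only bookkeeping point is that $\psi_1(x^*_\perp) \ne 0$, which is guaranteed by the non-vanishing-measurement assumption stated just after \eqref{definition far-field operator}, so that both constants $c_1$ and $c_2$ are strictly positive and finite.
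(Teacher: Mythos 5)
Your proposal is correct and follows essentially the same route as the paper: rewrite $\langle F\psi_z,\psi_z\rangle$ as $\langle TS\psi_z, S\psi_z\rangle_{L^2(D)}$ via the factorization $F=S^*TS$, then obtain the lower bound from the coercivity of $T$ (Theorem \ref{theorem T coercivity}) and the upper bound from the boundedness of the multiplication operator $T$. No gaps; the remark that $\psi_1(x^*_\perp)\neq 0$ is the right bookkeeping point.
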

\begin{proof}
From the factorization of the far-field operator, it first follows that
\begin{eqnarray*}
I(z)= | \langle F \psi_z,\psi_z \rangle_{L^2(k_-,k_+)} | = |\langle S^*TS \psi_z,\psi_z \rangle_{L^2(k_-,k_+)} | = |\langle  TS \psi_z, S\psi_z \rangle_{L^2(D)} |.
\end{eqnarray*}
The lower bound then follows from the coercivity of $T$ in Theorem \ref{theorem T coercivity}, the upper bound follows directly from the boundedness of $T$. This completes the proof of the theorem.
\end{proof}
Theorem \ref{factorization-based SM theorem} implies that $I(z)$ is qualitatively the same as $\|S \psi_z\|^2_{L^2(D)}$ without the knowledge of $D$. {The following Lemma gives the explicit expression of $S \psi_z$}.
\begin{lemma} \label{theorem s psiz formula}
It holds that
\begin{equation} \label{theorem s psiz formula eqn}
(S \psi_z)(y) =  \sqrt{\psi_1(y_\perp)}   \frac{e^{ik_+(z_1-y_1)}-e^{ik_-(z_1-y_1)}}{i(z_1-y_1)}, \quad y \in D,
\end{equation}
where the above quotient is understood in the limit sense when $z_1=y_1$.
\end{lemma}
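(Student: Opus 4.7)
The plan is to establish the identity by direct substitution of $\psi_z(\sigma) = e^{i\sigma(z_1 - x^*_1)}$ into the definition \eqref{operator S} of $S$, and then evaluate the resulting elementary integral in $\sigma$.

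First I would write
\begin{equation*}
(S\psi_z)(y) = \int_{k_-}^{k_+} \sqrt{\psi_1(y_\perp)}\, e^{-i\sigma(y_1 - x^*_1)}\, e^{i\sigma(z_1 - x^*_1)}\, \ind \sigma.
\end{equation*}
The key observation is that the phase factor $e^{\pm i\sigma x^*_1}$ coming from the measurement point cancels inside the exponent, reducing the integrand to $\sqrt{\psi_1(y_\perp)}\, e^{i\sigma(z_1 - y_1)}$. Since the factor $\sqrt{\psi_1(y_\perp)}$ is independent of $\sigma$, it pulls out of the integral, leaving
\begin{equation*}
(S\psi_z)(y) = \sqrt{\psi_1(y_\perp)} \int_{k_-}^{k_+} e^{i\sigma(z_1 - y_1)}\, \ind \sigma.
\end{equation*}

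Next, for $z_1 \neq y_1$, the remaining integral is a standard complex exponential whose antiderivative is $e^{i\sigma(z_1-y_1)}/[i(z_1-y_1)]$; evaluating at the endpoints yields exactly the claimed quotient
$\bigl[e^{ik_+(z_1-y_1)}-e^{ik_-(z_1-y_1)}\bigr]/[i(z_1-y_1)]$.
For the degenerate case $z_1 = y_1$, the integrand reduces to $1$ and the integral equals $k_+ - k_-$, which coincides with the limit as $z_1 \to y_1$ of the displayed quotient (by L'H\^opital, or by expanding the numerator as $i(z_1-y_1)(k_+ - k_-) + O((z_1-y_1)^2)$). This is precisely the sense in which the formula is to be understood at $z_1 = y_1$, completing the proof.

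There is no genuine obstacle here: the lemma is a direct unfolding of the definition of $S$ followed by integration of a complex exponential, and the only subtle point is flagging that the quotient is to be interpreted as its limit when $z_1 = y_1$ so that $(S\psi_z)(y)$ is well defined and continuous in $y_1$.
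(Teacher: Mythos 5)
Your proposal is correct and follows exactly the paper's own argument: substitute $\psi_z(\sigma)=e^{i\sigma(z_1-x^*_1)}$ into the definition of $S$, cancel the $x^*_1$ phases, and integrate the resulting complex exponential over $(k_-,k_+)$, with the $z_1=y_1$ case handled as the limit of the quotient. No gaps.
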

\begin{proof}
From the definition of $S$, we directly have that
\begin{eqnarray*}
(S \psi_z)(y) &=& \sqrt{\psi_1(y_\perp)}  \int_{k_-}^{k_+}   e^{i\sigma(z_1-y_1)}   \ind \sigma \\
&=& \sqrt{\psi_1(y_\perp)} \frac{e^{ik_+(z_1-y_1)}-e^{ik_-(z_1-y_1)}}{i(z_1-y_1)},
\end{eqnarray*}
where the above quotient is understood in the limit sense when $z_1=y_1$. This completes the proof.
\end{proof}
     \begin{figure}[ht!]
\includegraphics[width=0.5\linewidth]{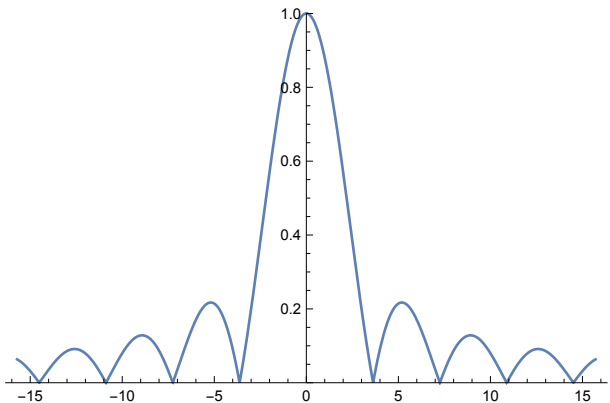}
     \caption{
     \linespread{1}
The plot of $|S\psi_z|/\max\{S\psi_z\}$ with $y_1=0$ for a two-dimensional waveguide $(-\infty,\infty) \times (0,\pi)$ with Dirichlet boundary condition, $k_-=0$ and $k_+=\sqrt{3}$. The horizontal axis is $z_1$ and $z_1 \in (-5\pi,5\pi)$.
     } \label{resolution x1}
    \end{figure}
 
From the explicit formula of $S\psi_z$ in Lemma \ref{theorem s psiz formula}, we find that its maximum is obtained at $z_1=y_1$ and that $S\psi_z$ is oscillatory and decaying to zero when $|z_1-y_1|$ becomes large. We illustrate this property in Figure  \ref{resolution x1} with a fixed $y_1=0$ and $y_\perp$, where we plot $|S\psi_z|/\max\{|S\psi_z|\}$   for a two-dimensional waveguide $(-\infty,\infty) \times (0,\pi)$ with Dirichlet boundary condition where $k_-=0$ and $k_+=\sqrt{3}$. We further remark that the function $S\psi_z$ is only sensitive to the range sampling coordinate $z_1$ which can be directly observed from \eqref{theorem s psiz formula eqn}.  As a summary, {it is heuristically observed that $\|S \psi_z\|^2_{L^2(D)}$ peak in  $D^+=\{(y_1,y_\perp) \in W: \exists\, y_\perp^* \mbox{ s.t. } (y_1,y_\perp^*) \in D\}$ as the sampling point $z$ samples in a sampling region since there always exists $y=(y_1,y_\perp)\in D$ such that $y_1=z_1$ for any $z=(z_1,z_\perp) \in D^+$}. Together with
Theorem \ref{factorization-based SM theorem} it is inferred  that $I(z)$ is qualitatively the same as $\|S \psi_z\|^2_{L^2(D)}$ without the knowledge of $D$.
{ Though the factorization-based sampling method is not as mathematically rigorous as the factorization method, 
it is heuristically inferred that $I(z)$ peaks in  $D^+$ which is the range support of $D$ in the waveguide. }

\section{Numerical examples} \label{section numerical example}
In this section, we present a variety of numerical examples in two dimensions to illustrate the performance of the single mode multi-frequency sampling methods.

We generate the synthetic data $u^s$ using the finite element computational software Netgen/NGSolve \cite{schoberl1997netgen}. To be more precise, the computational domain is ${(-4,4)}\times \Sigma$ and the measurements are at  a random location on the cross section ${\{-2\}} \times \Sigma$. We apply a brick Perfectly Matched Layer (PML) in the domain $\{x_1: 2.5<|x_1|<4\}\times \Sigma$ and choose PML absorbing coefficient $5+5i$.   In all of the numerical examples, we apply the second order finite element to solve for the  wave field where the source is constant $1$; the mesh size is chosen as $|\Sigma|/10$  where {{we recall that $|\Sigma|$ denotes} the length  of the cross section. We further add $5\%$ Gaussian noise to the synthetic data $u^s$ to implement the imaging function in Matlab.

We first illustrate the implementation of the factorization method and the factorization-based sampling method. Let $\{k_1,k_2,\cdots,k_N\}$ be the set of equally discretized wave numbers, this yields the discretized $N\times N$ multi-frequency far-field matrix $F_N$ of its continuous counterpart $F$ in \eqref{definition far-field operator}.

\begin{enumerate}
\item The factorization method.  Let $(\widehat{\phi}_j,\widehat{\alpha}_j)_{j=1}^N$ be the eigensystem of the self-adjoint matrix $(F_N)^{1/2}$. The imaging function indicated by the factorization method is 
\begin{equation*}
I_{\mbox{\tiny FM}}(z):= \left( \sum_{j=1}^N \frac{|\langle \widehat{\psi}_{z}^\epsilon,\widehat{\phi}_j \rangle|^2}{\widehat{\alpha}_j^2} \right)^{-1},
\end{equation*}
where $\epsilon$ is a fixed tolerance level, and $\widehat{\psi}_{z}^\epsilon$ is the discretization of ${\psi}_{z}^\epsilon$ \eqref{def psiz epsilon} at the multiple wavenumbers (i.e. multi-frequencies) $\{k_1,k_2,\cdots,k_N\}$. The $\langle \cdot, \cdot \rangle$ represents the inner product of two vectors that arise from the discretization of their continuous counterparts. The main Theorem \ref{corollary FM main} of the factorization method implies that $I_{FM}(z)$ is bounded below by $0$ for $z\in D^+_{1,\epsilon}$  and almost vanishes for  $z_1 \not \in D^+_{1,\epsilon}$ where $D^+_{1,\epsilon}$ is a neighborhood of the support $D_1^+$ within the tolerance level $\epsilon$. We choose tolerance level $\epsilon=0.01$ in all of the numerical examples. Though we have fixed this tolerance level, we have tested other small tolerance levels and haven't found any visual difference on the imaging results.

         \begin{figure}[ht!]
\includegraphics[width=0.49\linewidth]{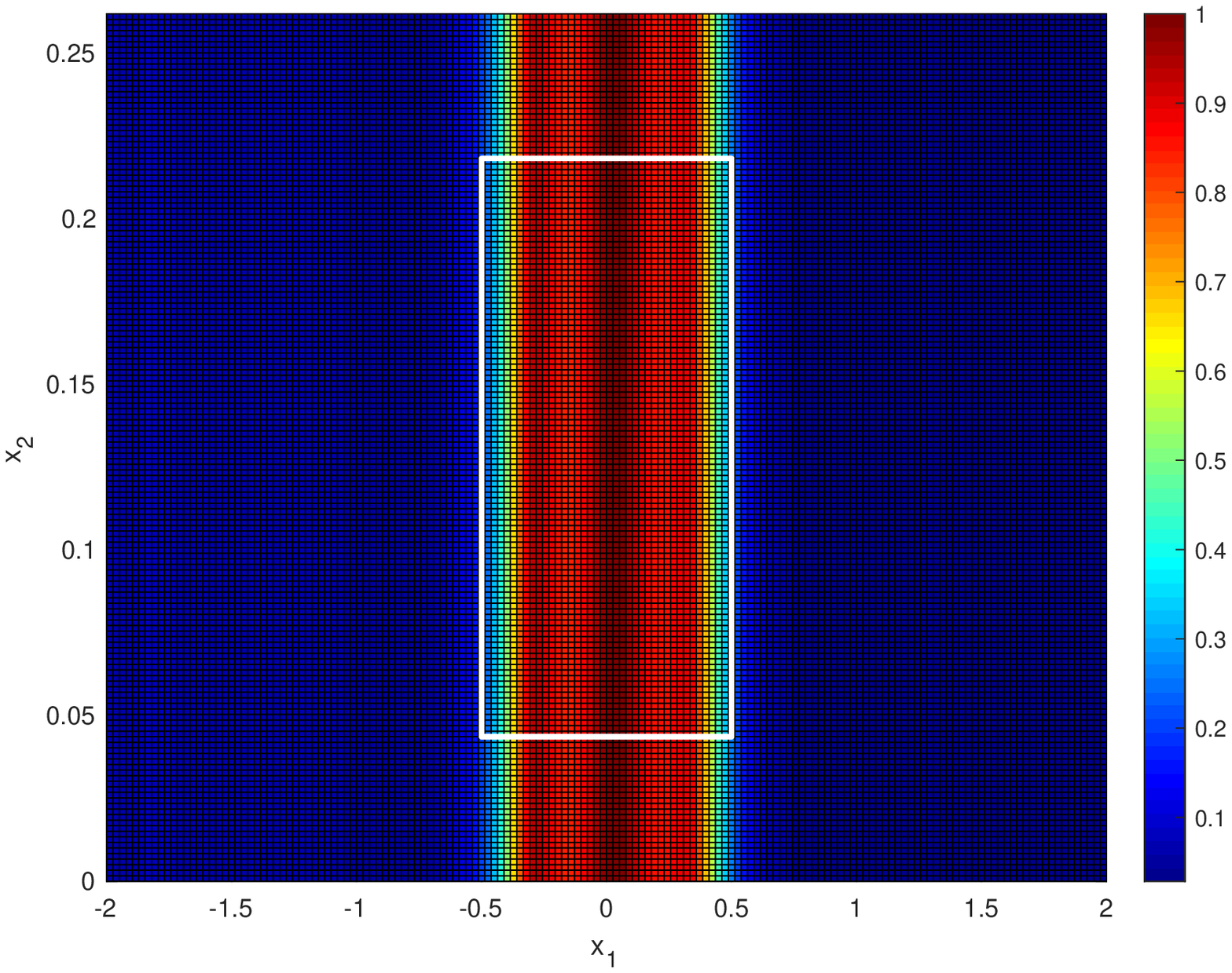}
\includegraphics[width=0.49\linewidth]{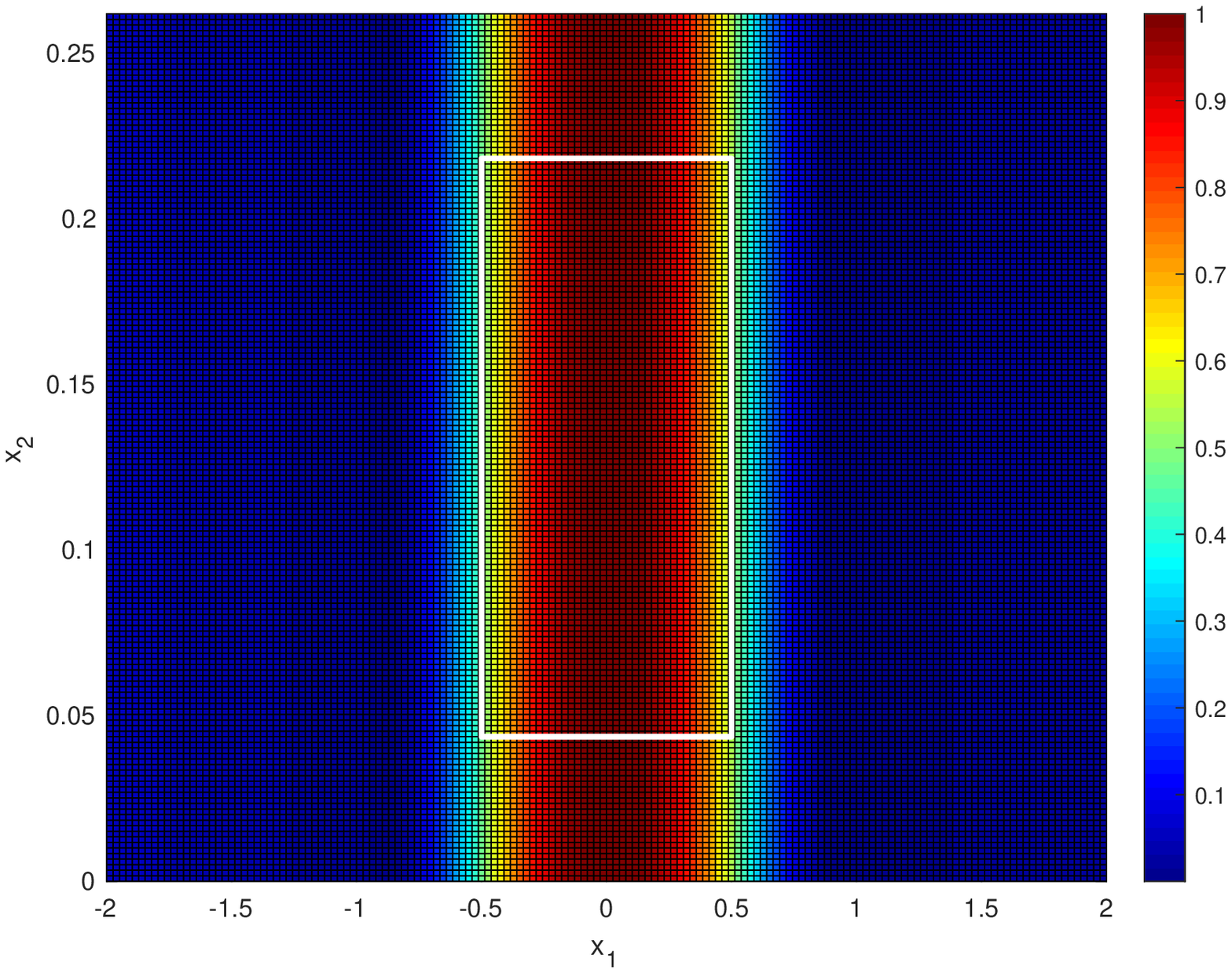}
\includegraphics[width=0.49\linewidth]{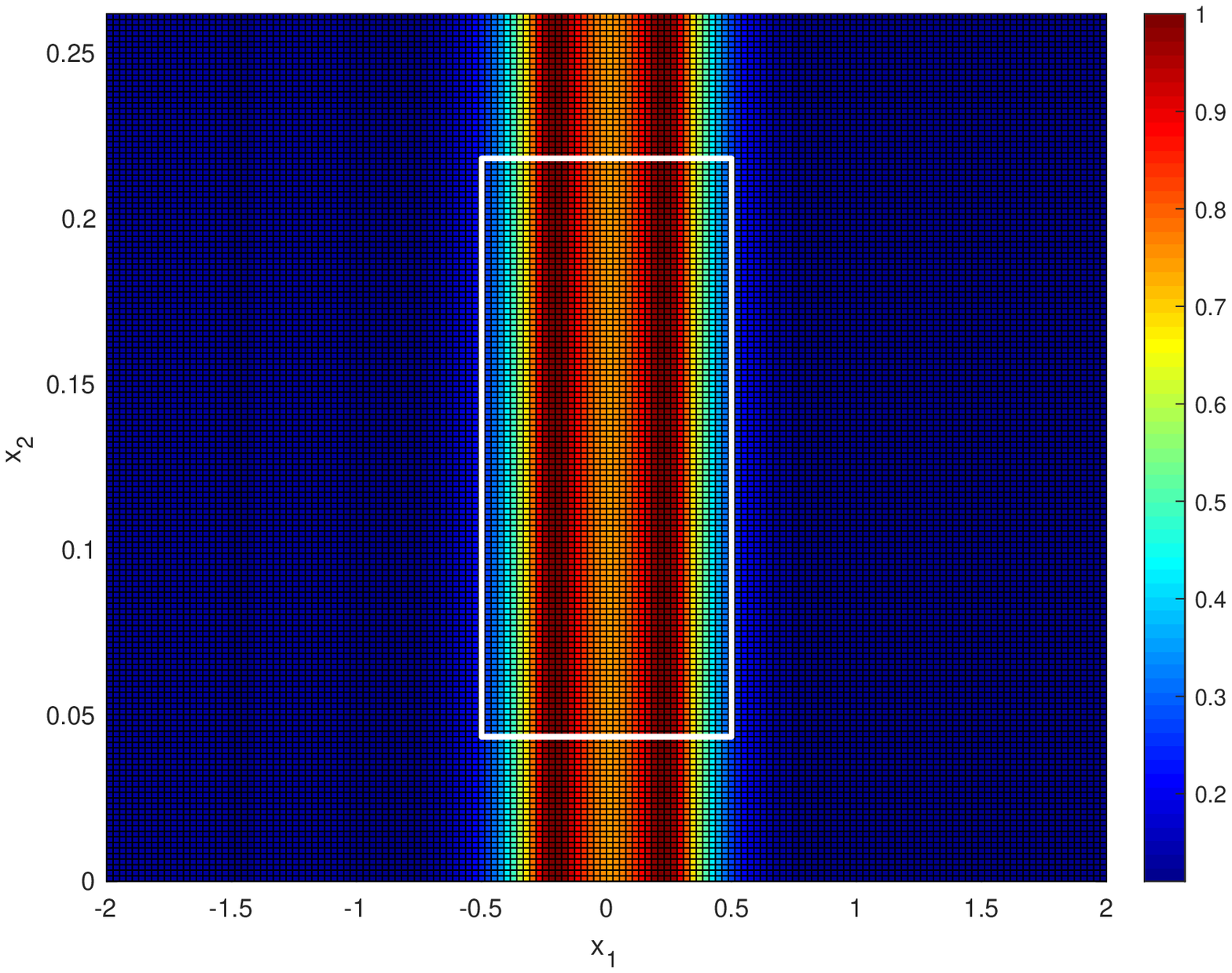}
\includegraphics[width=0.49\linewidth]{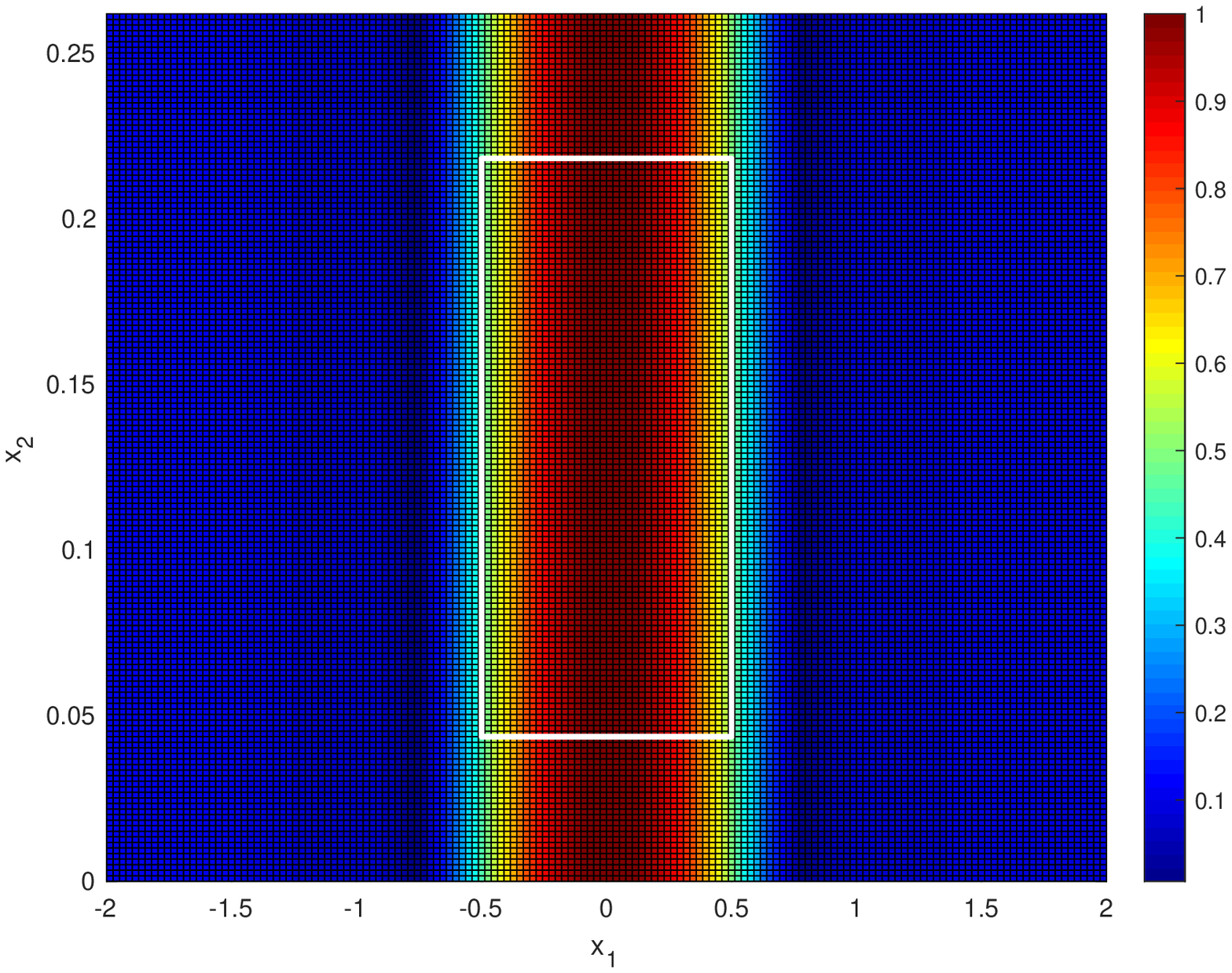}
\includegraphics[width=0.49\linewidth]{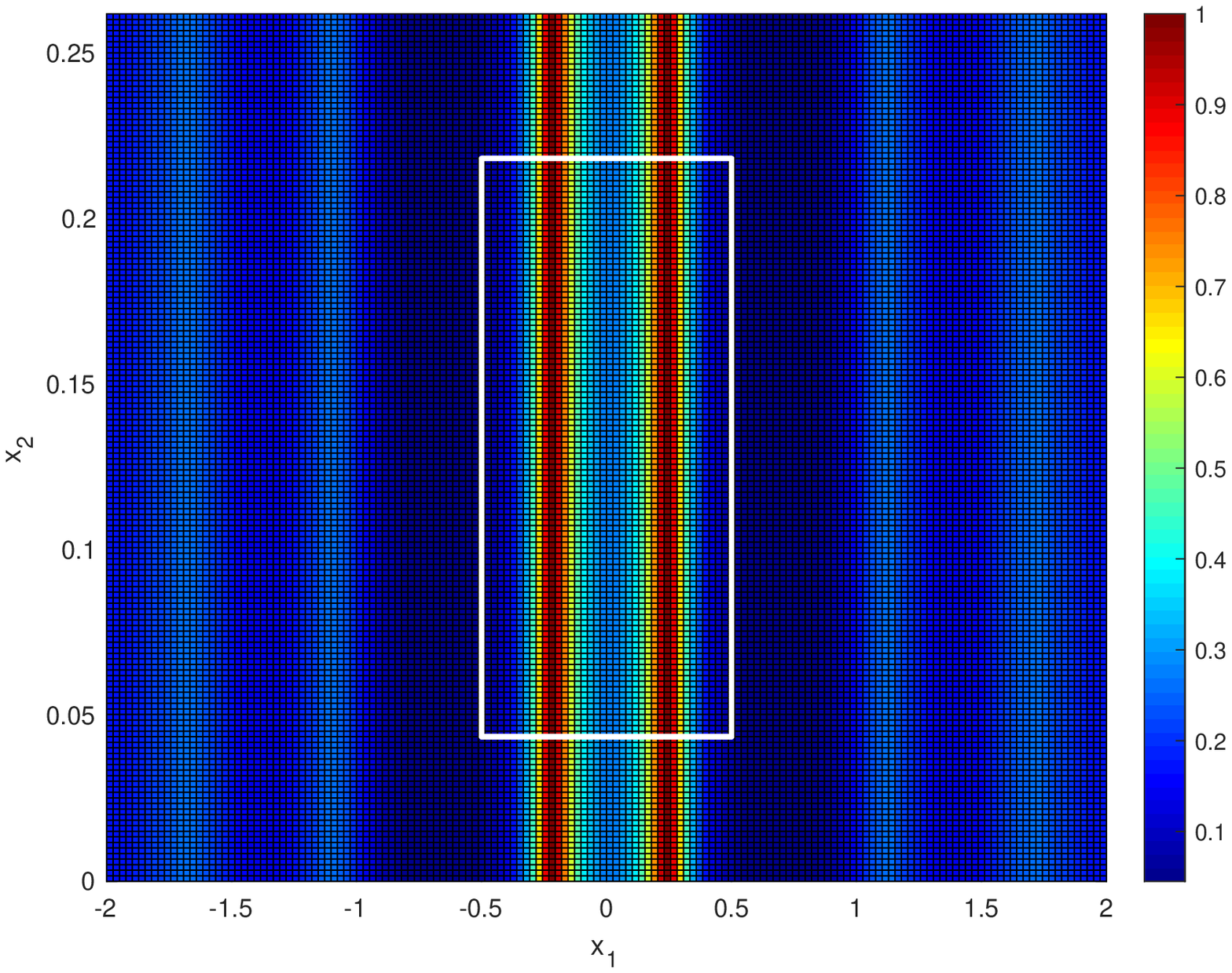}
\includegraphics[width=0.49\linewidth]{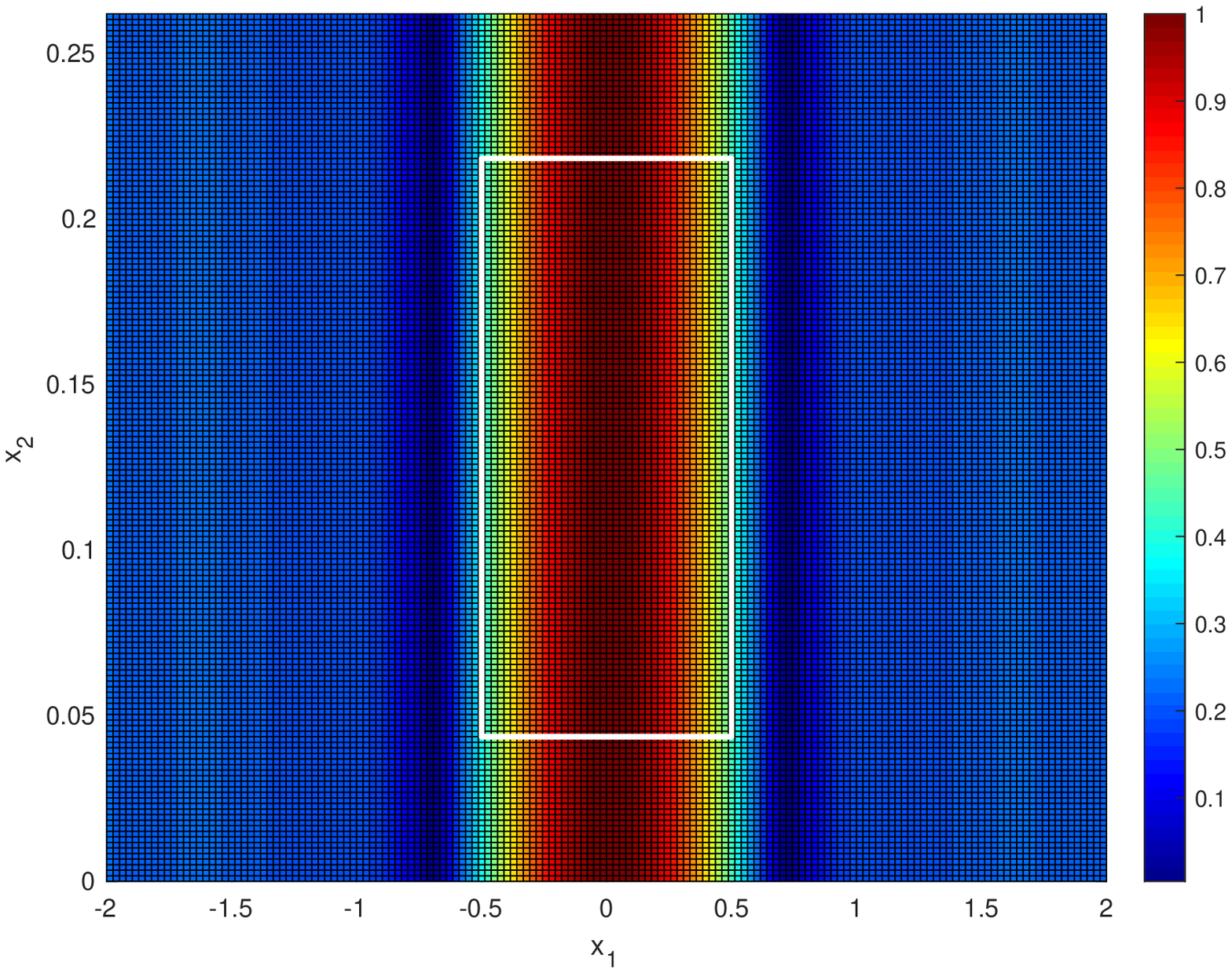}
     \caption{
     \linespread{1}
The range reconstruction of a  rectangular  source in a Neumann boundary condition waveguide. The exact support of the source is indicated by the white solid lines. Top: 47 frequencies. Middle: 23 frequencies. Bottom: 11 frequencies.  Left: the factorization method. Right: the factorization-based sampling method.
     } \label{2d Neumann fac fbsm rectangular}
    \end{figure}    
\item The factorization-based sampling method. Let $\widehat{\psi}_z$ be the   discretization of ${\psi}_{z}$ at the multiple wavenumbers (i.e. multi-frequencies)  $\{k_1,k_2,\cdots,k_N\}$. The imaging function indicated by the factorization-based sampling method is
\begin{equation*}
I_{\mbox{\tiny FBSM}}(z):=| \langle F_N \widehat{\psi}_z,\widehat{\psi}_z \rangle |.
\end{equation*}
The $\langle \cdot, \cdot \rangle$ represents the inner product of two vectors that arise from the discretization of their continuous counterparts. 
\end{enumerate}
For the visualization, we plot both $I_{\mbox{\tiny FM}}(z)$ and $I_{\mbox{\tiny FBSM}}(z)$ over the sampling region $\{-2,2\}\times \Sigma$ and we always normalize them such that their maximum value are $1$ respectively.


\subsection{Number of frequencies}

The first set of numerical examples is to illustrate how the number of frequencies affects the image. We consider a rectangular source in a two dimensional waveguide $\{-\infty,\infty\}\times (0,h)$ with Neumann boundary condition and $h=\pi/12$. In this case, the first propagating mode is $e^{ik  |x_1|}\frac{1}{\sqrt{h}}$.  The range reconstruction of the rectangular source is shown in Figure \ref{2d Neumann fac fbsm rectangular} where we have considered three different cases:
\begin{itemize}
\item Case I: 47 frequencies in the set $\{0.25, 0.5, \cdots, 11.5,11.75\}$.
\item Case II: 23 frequencies in the set $\{0.5, 1.0, \cdots, 11.0,11.5\}$.
\item Case III: 11 frequencies in the set $\{1.0, 2.0, \cdots, 10.0,11.0\}$.
\end{itemize}
It is observed that the factorization method performs better with more frequencies, while the factorization-based sampling method performs almost the same in these three different cases. { Such a difference may be due to that  the factorization method explicitly involves solving an ill-posed equation so that a more sophisticated discretization is perhaps needed in the implementation.} Another observation is that the imaging function of the factorization method sharply vanish outside the range support (which agrees with the factorization method main Theorem \ref{theorem FM main}), and the imaging function of the factorization-based sampling method gradually becomes small as the sampling point samples away from the range support (which also agrees with the main theorem of the factorization-based sampling method Theorem \ref{factorization-based SM theorem} and the property of the point spread function $S\psi_z$).

\subsection{A ``complete block'' source}
Using sampling methods to image of a complete block of the waveguide is an interesting problem \cite{monk2012sampling}. The L-shape case under consideration in this subsection is  similar to a complete block case considered in \cite{monk2012sampling}. Results in Figure \ref{2d Neumann fac fbsm Lshape}  imply that a ``complete block'' source {(i.e. a source which spans the entire cross-section)} can be reconstructed with our sampling methods with multi-frequency measurements. This is due to that the multi-frequency measurements make use of the phase (travel time) which provides information on the bulk location.

     \begin{figure}[ht!]
\includegraphics[width=0.49\linewidth]{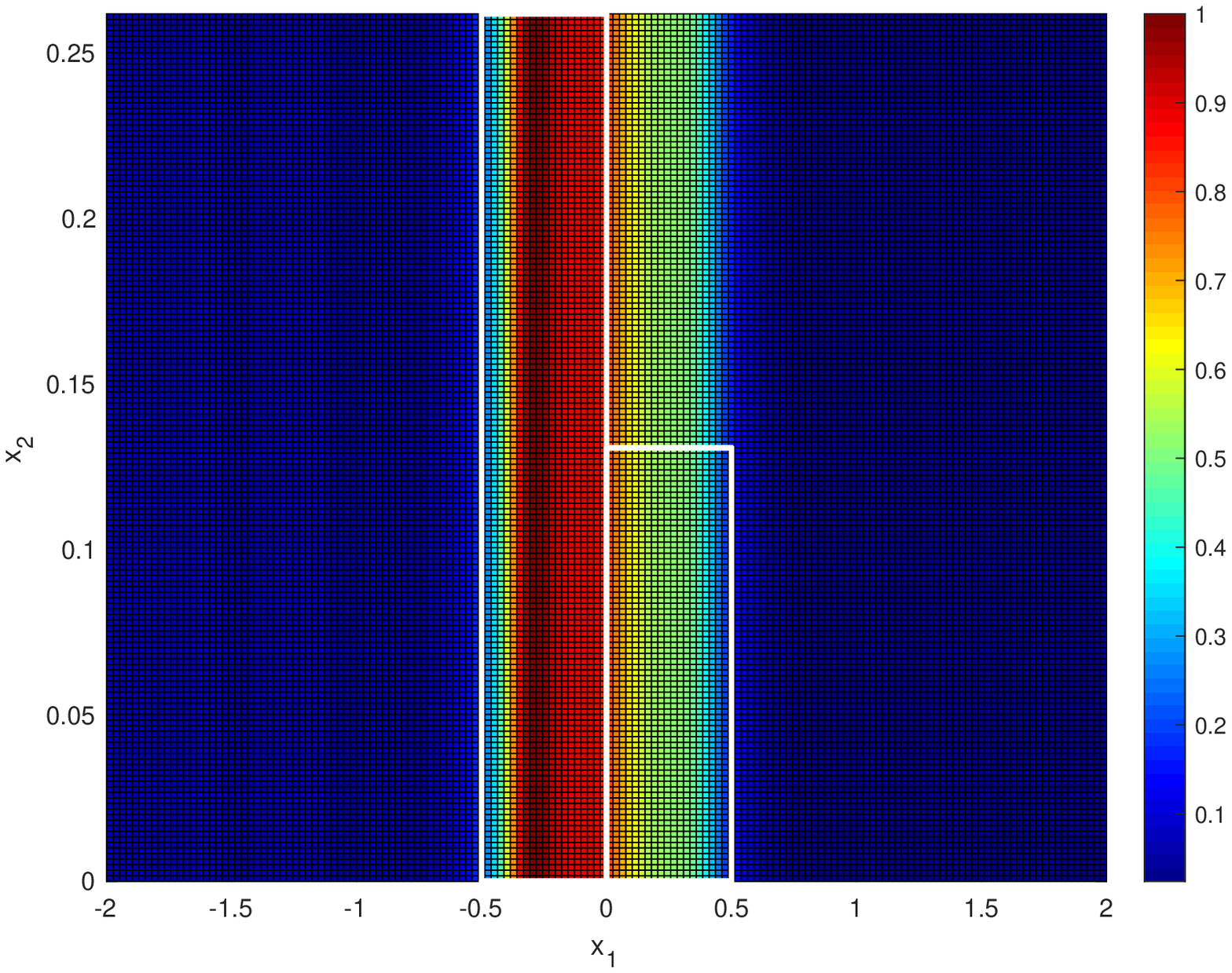}
\includegraphics[width=0.49\linewidth]{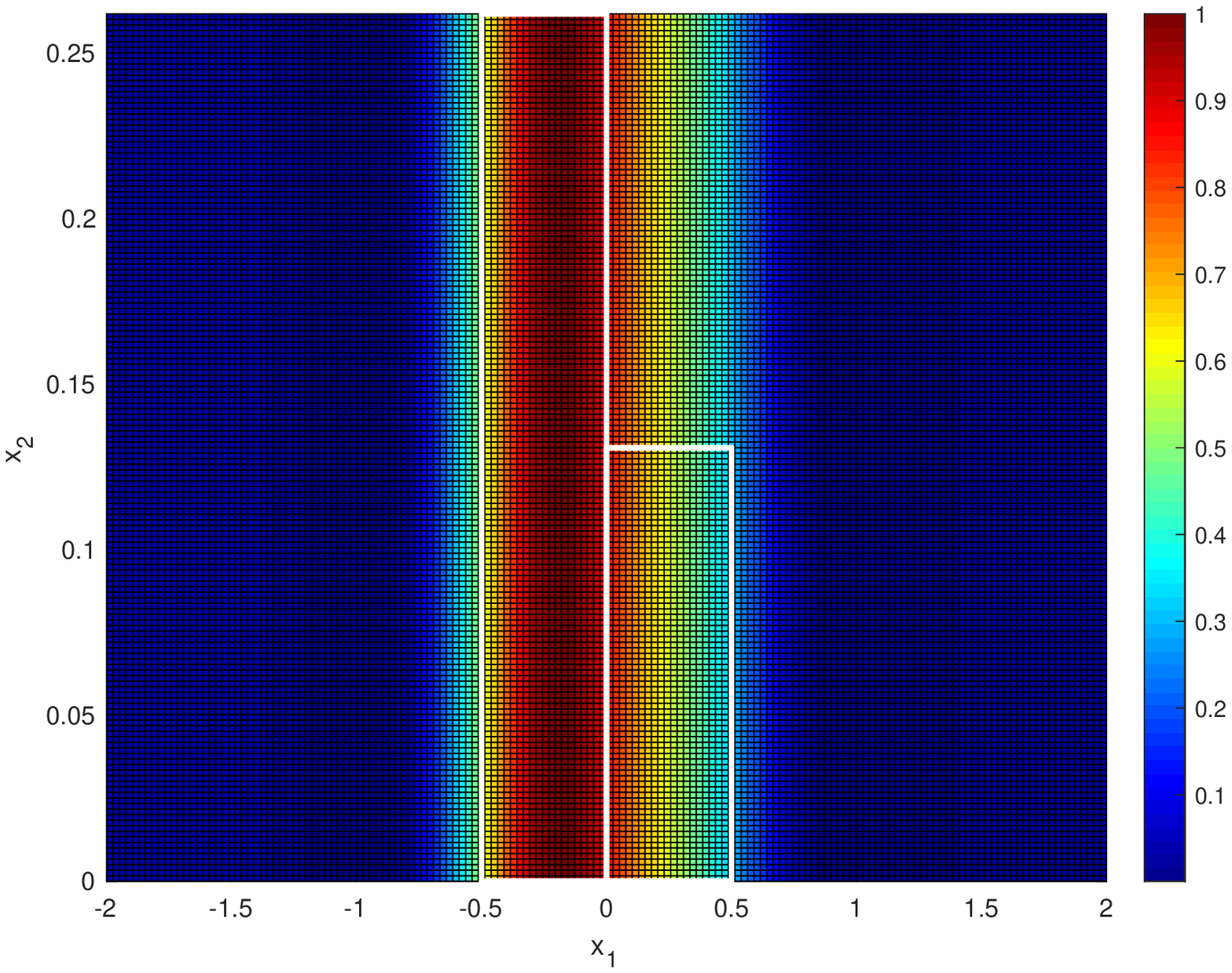}
     \caption{
     \linespread{1}
The range reconstruction of {an}  L-shape  source  in a Neumann boundary condition waveguide with 47 frequencies. The exact support of the source is indicated by the white solid lines.  Left: the factorization method. Right: the factorization-based sampling method.
     } \label{2d Neumann fac fbsm Lshape}
    \end{figure}
%
%

\subsection{Different boundary conditions}
We further illustrate the performance of the sampling methods with different boundary conditions of the waveguide. In this regard, a two dimensional waveguide $\{-\infty,\infty\}\times (0,h)$ with a mixed boundary condition and $h=\pi/12$ is considered. The mixed boundary condition  is a Dirichlet boundary condition on the top boundary and a Neumann boundary condition on the bottom boundary. 
The first propagating mode in this case is $e^{i\sqrt{k^2-(\pi/2h)^2}  |x_1|}\sqrt{\frac{2}{h}} \cos(\pi x_\perp /2h)$, and the corresponding dispersion relation reads $
\mu_1(k) = \sqrt{k^2-(\pi/2h)^2}$. Note that this mixed boundary condition yields a non-linear dispersion relation. The first passband corresponds to $k \in (6,12)$, and the following numerical example is produced with 41  wavenumbers (i.e.  frequencies) in the set $\{k=\sqrt{(\pi/2h)^2+\sigma^2}: \sigma = 0.25, 0.5, \cdots, 10.0,10.25\}$.

The first row of Figure \ref{2d mixed fac fbsm rectangular rhombus} is the range reconstruction of a rectangular source. The support of the source is the same as the one in Figure \ref{2d Neumann fac fbsm rectangular}  in order to compare performance with respect to different boundary conditions, and it is observed that the performances of the sampling methods in these two different boundary condition cases are similar. The difficulty arising from the non-linear dispersion relation in the mixed boundary condition case  is overcame via the carefully defined far-field operator.

The second row  of Figure \ref{2d mixed fac fbsm rectangular rhombus} is the range reconstruction of a rhombus shape source. Note that the rhombus shape source does not satisfy the assumption in Corollary \ref{corollary FM main}. In particular the left and right corner of the rhombus shape source would be reconstructed with certain tolerance for the factorization method  as indicated by Theorem \ref{theorem FM main} and Lemma \ref{lemma range S*} (and its proof). Since the left and right corners are sharp, the factorization-based sampling method may also reconstruct the support with certain tolerance. These observations are illustrated by the the second row  of Figure \ref{2d mixed fac fbsm rectangular rhombus}.

     \begin{figure}[ht!]
     \includegraphics[width=0.49\linewidth]{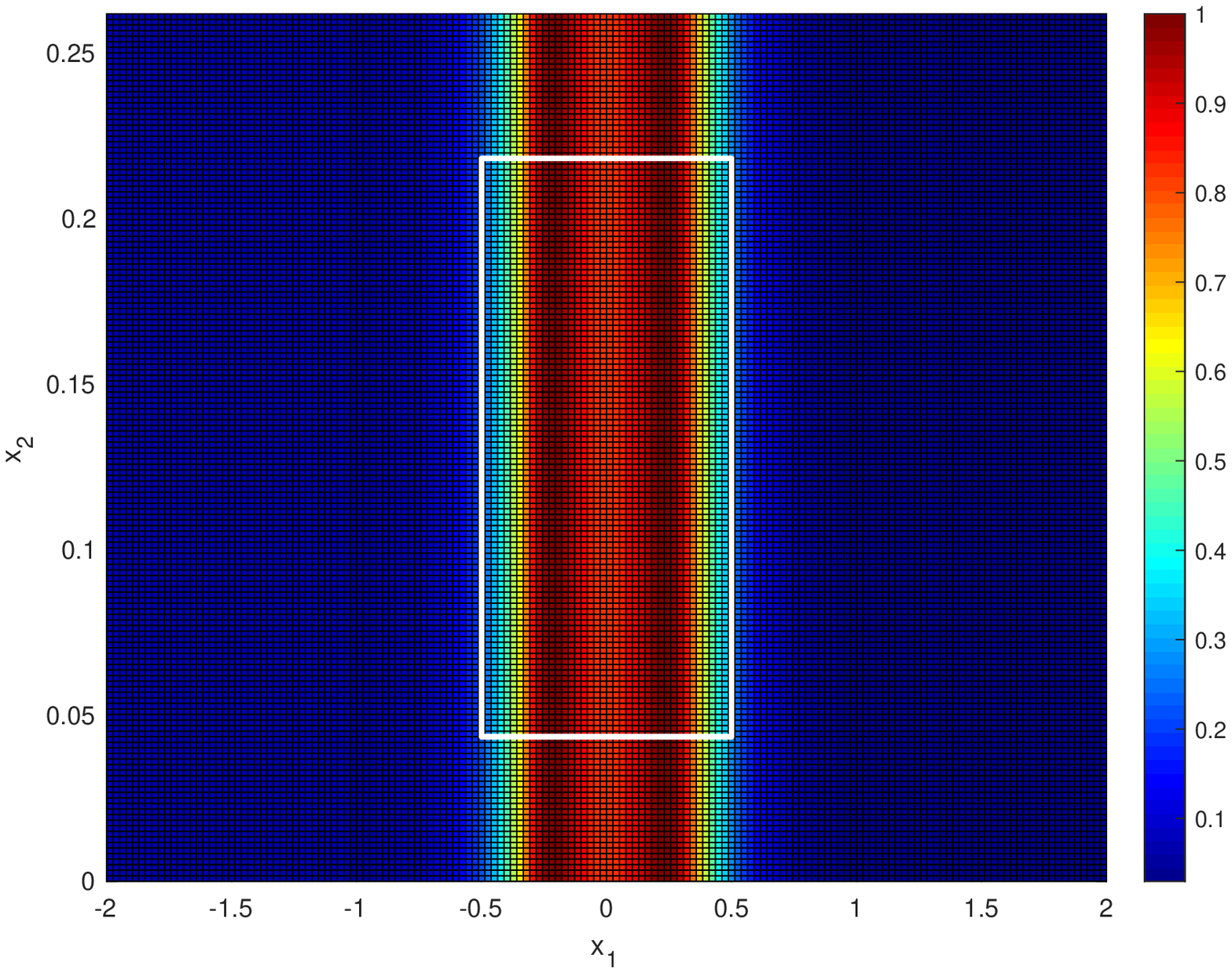}
\includegraphics[width=0.49\linewidth]{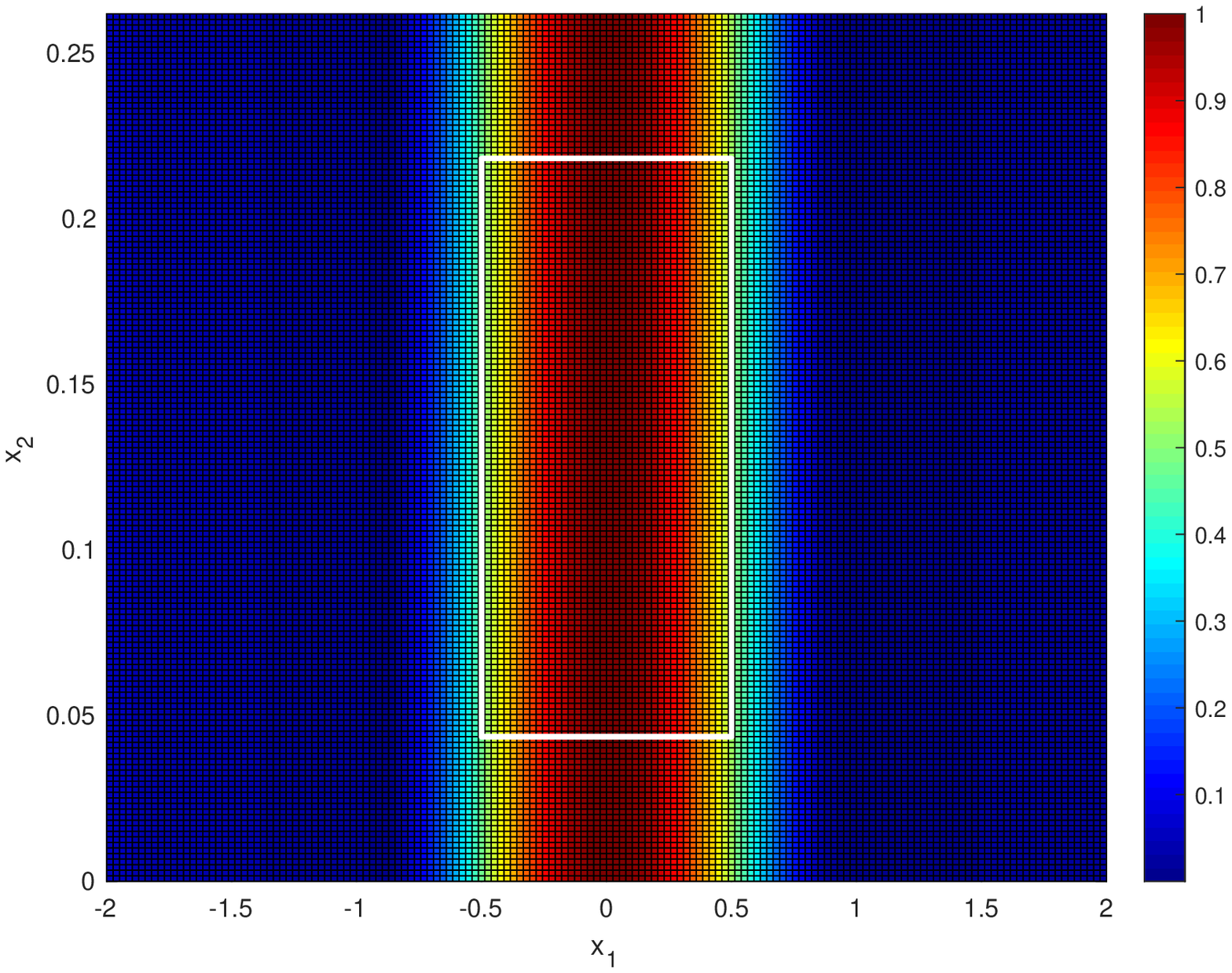}
\includegraphics[width=0.49\linewidth]{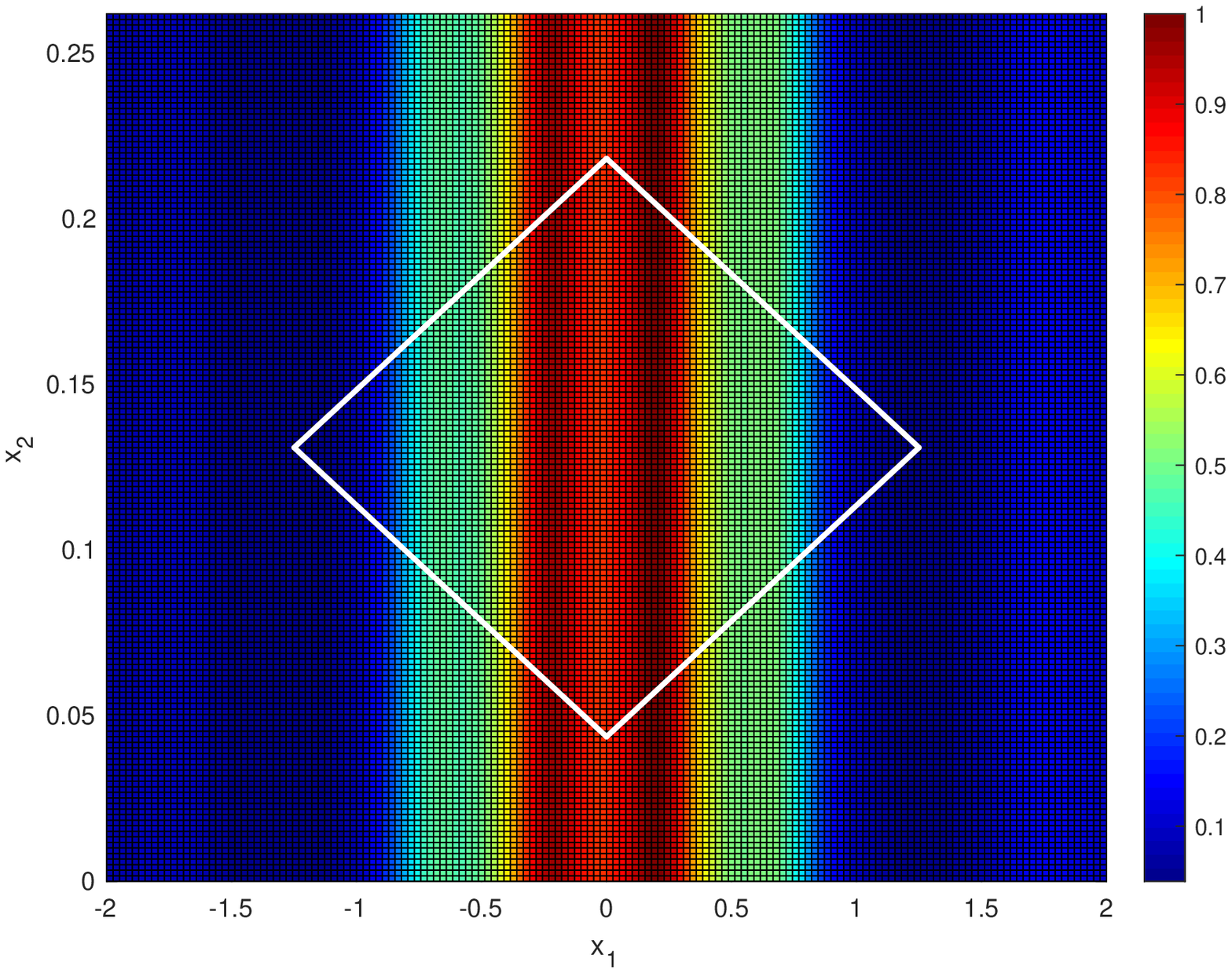}
\includegraphics[width=0.49\linewidth]{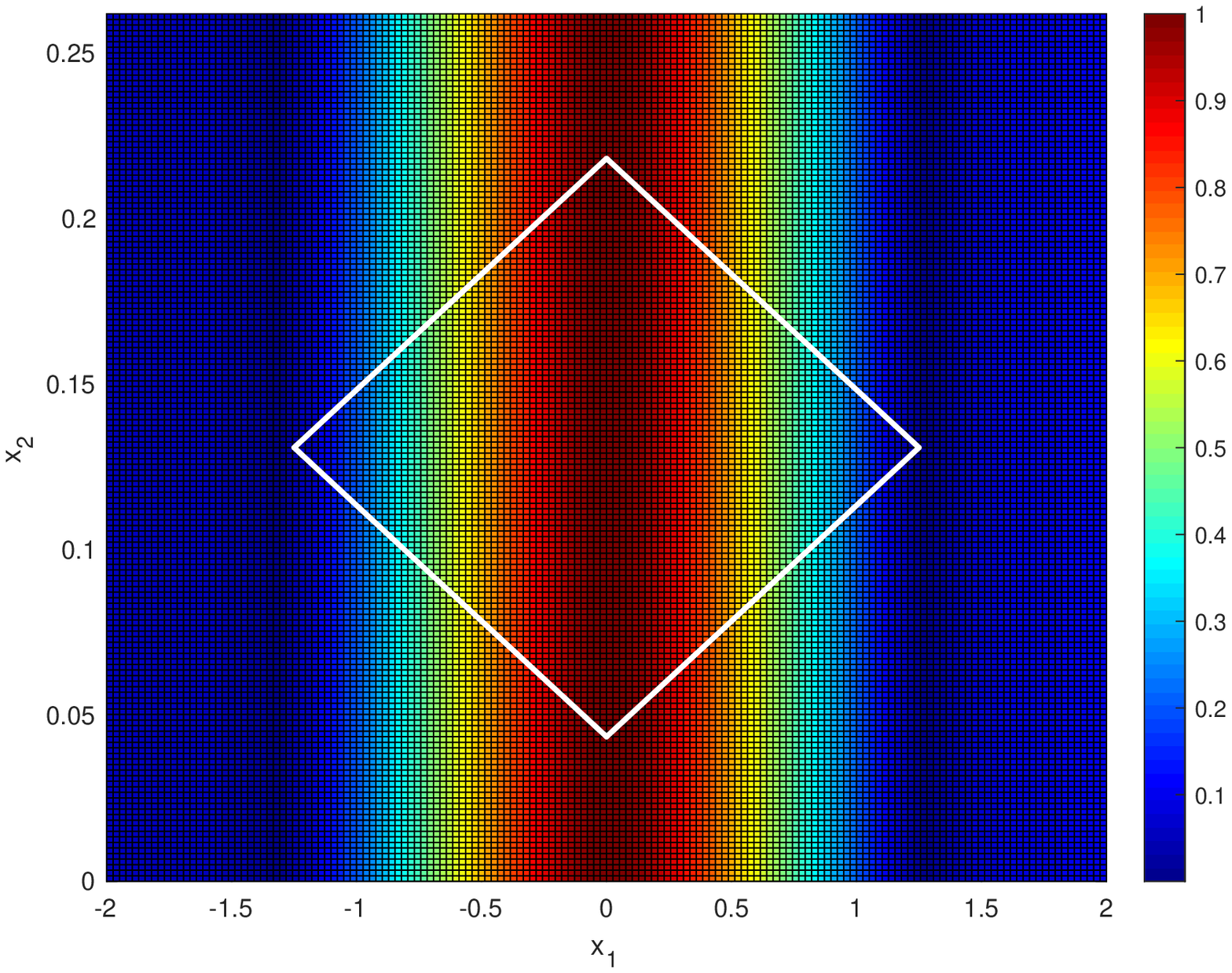}
     \caption{
     \linespread{1}
The range reconstruction of sources in a mixed boundary condition waveguide with 41 frequencies. The exact support of the source is indicated by the white solid lines. Top: a rectangular source. Bottom: a rhombus shape source. Left: the factorization method. Right: the factorization-based sampling method.
     } \label{2d mixed fac fbsm rectangular rhombus}
    \end{figure}
    
%

\subsection{A complete sound-soft block}  

In the last subsection, we illustrate how to image a complete block (with sound soft boundary condition) by a preliminary numerical example. It is observed in \cite{monk2012sampling} that, a complete block of a waveguide can be reconstructed with near field measurements, however it may not be reconstructed with far-field measurements using the linear sampling method at a single frequency. With the multi-frequency measurements in our framework, it is possible to image such a complete block using far-field measurements. Though we have demonstrated this via the inverse source problem, to be more convincing we show a preliminary example for imaging a sound soft block. The waveguide is $(-\infty,\infty)\times (0,h)$ with Neumann boundary condition and $h=\pi/12$. The complete block is at $-0.5 \times (0,h)$. In this case, the scattered wave field at far-field $x_r \in \{-10\}\times (0,2)$ due to a point source at transmitter location $x_s \in \{-10\}\times (0,2)$ can be written explicitly and we further add $5\%$ noise to this wave field to generate our synthetic data. 

The difference between the block case and the source case can be interpreted though the travel time: the travel time in the block case roughly doubles the one in the source case (where the block and the source are at the same location); this is because the wave travel from the transmitter  to the block then back to the receiver in the block case while the wave just travel from the source to the receiver in the source case. Therefore the functions $\psi_z^\epsilon$ and $\psi_z$ have to be modified by doubling their phases, i.e. $\psi_z^\epsilon(\sigma) \to \frac{1}{|B(z,\epsilon)|}\int_{B(z,\epsilon)}e^{i2\sigma(y_1-x_1^*)}  \ind y$ and $\psi_z(\sigma) \to  e^{i 2 \sigma(z_1-x_1^*)} \sqrt{\psi_1(z_\perp)}$. This is the only change in our implementation to image a complete block with the sampling methods. Figure \ref{2d neumann fac fbsm block} illustrate the performance of the sampling methods to image a complete sound soft block. $47$ frequencies in the set $\{0.25, 0.5, \cdots, 11.5,11.75\}$ are used. It is observed that the block is correctly located and a  sharper result is given by the factorization method. The obstacle/block case is more complicated than the source case and a complete theoretical justification is part of our future work.

     \begin{figure}[ht!]
     \includegraphics[width=0.49\linewidth]{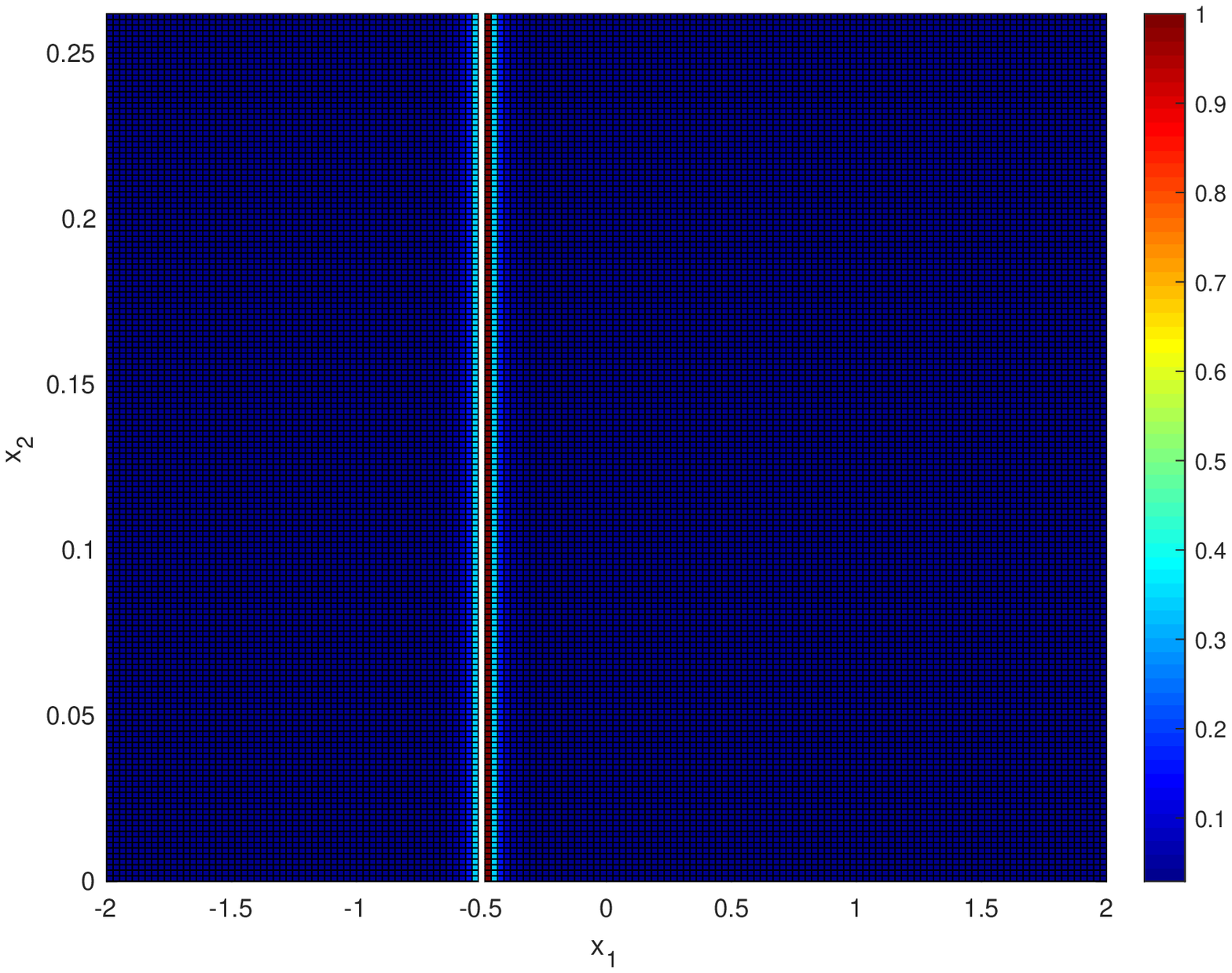}
\includegraphics[width=0.49\linewidth]{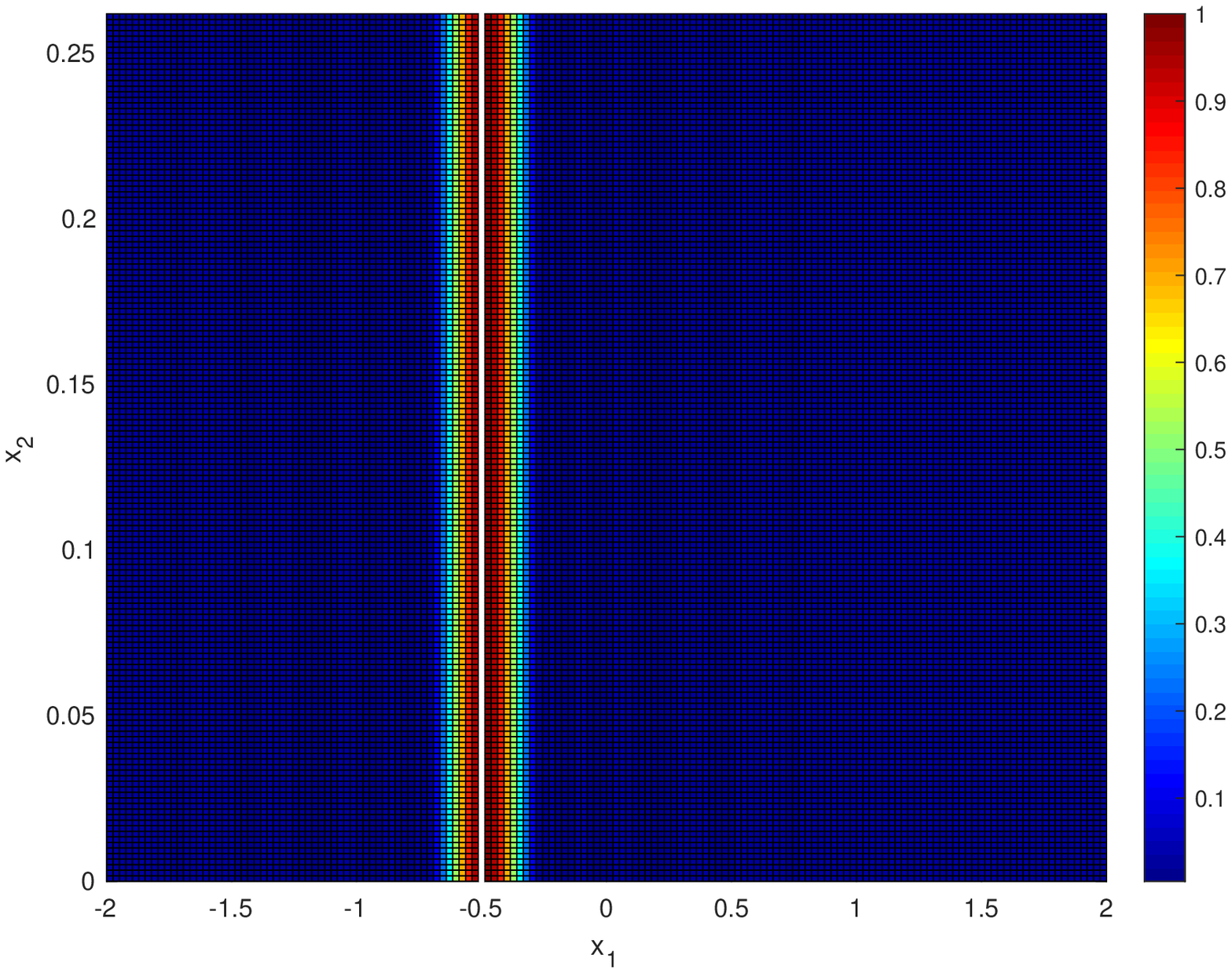}
     \caption{
     \linespread{1}
The range reconstruction of a complete {(Dirichlet)} block in a Neumann boundary condition waveguide with 47 frequencies. The exact support of the source is indicated by the white solid lines. Left: the factorization method. Right: the factorization-based sampling method.
     } \label{2d neumann fac fbsm block}
    \end{figure}

\section{Remark on designs of multi-frequency operator and assumptions on the source} \label{section design multi-frequency operator and source assumption}
In the above discussions of the multi-frequency sampling methods, we have made the assumption on the source $f$ that $e^{i\theta}f>c_1>0$ or $e^{i\theta}f<c_2<0$ with some $\theta \in [0,2\pi)$. Indeed it is possible to relax this assumption by designing other multi-frequency operators. We introduce in this section another two multi-frequency operators to remark that the sampling methods can require less restrictive conditions on the source; this may be of interest to readers who is more interested in minimal assumption on the source.
The first operator in Section \ref{multi-frequency operator backward and forward measurements} is to consider both ``backward and forward scattering'' measurements; the second operator in Section \ref{section multi-frequency operator alpha} still uses only the ``backscattering'' measurements (which is less obvious to design) and it usually uses information in part of the frequency range. Their advantages are that the sampling methods with these operators would require a less restrictive assumption that $\Re(f(x) e^{i\tau} ) > c_3>0 $ almost everywhere for $x\in D$ with some constant phase $\tau$; their drawbacks are that either more measurements have to be used (Section \ref{multi-frequency operator backward and forward measurements}) or information in part of the frequency range can usually be utilized (Section \ref{section multi-frequency operator alpha}). The first technique in Section \ref{multi-frequency operator backward and forward measurements} is in the same spirit of the inverse source problem in the whole space  \cite{GriesmaierSchmiedecke-source} (that uses measurements from two opposite directions); the second technique in Section \ref{section multi-frequency operator alpha} seems new which may facilitate further advancement of the multi-frequency factorization method using only the backscattering measurements. In the following we briefly show how to generalize the results in Section \ref{section operator}--\ref{section sampling methods} using these two multi-frequency operators. 
 
\subsection{A multi-frequency operator based on ``backward and forward scattering'' measurements} \label{multi-frequency operator backward and forward measurements}
One way to relax the assumption on the source is to consider both the ``backward and forward scattering'' measurements, i.e. measurements at {$x_l:=(x^*_1,x^*_\perp)$ and $x_r:=(-x^*_1,x^*_\perp)$}. In this case, we introduce a similar
multi-frequency far-field operator $\widetilde{F}:L^2(k_-,k_+) \to L^2(k_-,k_+)$ by
{\small
\begin{equation} \label{definition far-field operator modify}
(\widetilde{F} g)(\sigma;x_l,x_r) :=  \int_{k_-}^{k_+} { -i } \mu_1(\omega_{\sigma\gamma}) \Big[H(\sigma-\gamma)u_p^s\left({x_l}; \omega_{\sigma\gamma}\right){+} H(\gamma-\sigma) e^{2i {|\sigma-\gamma|}x^*_1}  {u_p^s\left({x_r}; \omega_{\sigma\gamma}\right)} \Big] g(\gamma) \ind \gamma,
\end{equation}
}
$\sigma \in (k_-,k_+)$. 
One can then verify that
{\small
\begin{eqnarray*}
 (\widetilde{F} g)(\sigma;x_l,x_r)     
&=&1/2\int_{k_-}^{k_+}  \int_D  \psi_1(y_\perp)e^{i (\sigma-\gamma) |x^*_1-y_1|}  f(y) { \psi_1(x^*_\perp) }g(\gamma)  \ind y \ind \gamma \\
&=&  \int_D e^{i \sigma |x^*_1-y_1|} \overline{\sqrt{\psi_1(y_\perp)} }\left[\left(\int_{k_-}^{k_+}  \sqrt{\psi_1(y_\perp)}e^{-i \gamma |x^*_1-y_1|} g(\gamma)  \ind \gamma \right) \frac{  f(y) { \psi_1(x^*_\perp) } }{2}  \right] \ind y,
\end{eqnarray*}
}
and we can proceed as in Theorem \ref{theorem factorization S*TS} to conclude that $
\widetilde{F}=S^*\widetilde{T}S
$ with 
$\widetilde{T}: L^2(D) \to L^2(D)$ 
\begin{equation} \label{operator T0}
(\widetilde{T} h)(y) :=  \frac{\psi_1(x^*_\perp)f(y)h(y)  }{2}   , \quad y \in D.
\end{equation}

Though there is no assumption on $f$ to derive the factorization $\widetilde{F}=S^*\widetilde{T}S$  in the ``backward and forward scattering'' measurement case, an assumption that $\Re(f(x) e^{i\tau} ) >{c_3>0}$ almost everywhere for $x\in D$ with some $\tau$ would then be added to ensure the coercivity of an operator related to $\widetilde{T}$. 
To show the coercivity, we introduce for a generic bounded operator $A: L^2 \to L^2$ that
$$
\Re A = \frac{A+A^*}{2}, \quad \Im A = \frac{A-A^*}{2i}.
$$
Now we have
\begin{lemma} \label{lemma T0 coercive}
Suppose that $\Re ( f(x) e^{i\tau}) \ge c_3>0, \forall x\in D$ for some constant phase $\tau$, then we have that $\Re (e^{i\tau}\widetilde{T})$ is self-adjoint and coercive.
\end{lemma}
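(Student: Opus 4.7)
The plan is to explicitly compute the operator $\Re(e^{i\tau}\widetilde{T})$ by unwinding its definition, and then observe that it reduces to multiplication by a strictly positive real-valued function. Since multiplication operators by real-valued $L^\infty$ functions are self-adjoint, and coercivity of such a multiplication operator is equivalent to a positive essential lower bound on the multiplier, both conclusions will be immediate once the reduction is in place.

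The first step is to determine the adjoint $\widetilde{T}^*$. From the definition \eqref{operator T0}, and using that $\psi_1(x^*_\perp)$ is real, a routine calculation with the $L^2(D)$ inner product gives
\begin{equation*}
(\widetilde{T}^* h)(y) = \frac{\psi_1(x^*_\perp)\,\overline{f(y)}\,h(y)}{2}, \quad y\in D.
\end{equation*}
Combining this with $(e^{i\tau}\widetilde{T})^* = e^{-i\tau}\widetilde{T}^*$ and the definition $\Re A = (A+A^*)/2$ stated just above the lemma, I would then write
\begin{equation*}
\bigl(\Re(e^{i\tau}\widetilde{T})h\bigr)(y) = \frac{\psi_1(x^*_\perp)\,\Re\!\bigl(e^{i\tau}f(y)\bigr)}{2}\,h(y), \quad y\in D,
\end{equation*}
i.e.\ $\Re(e^{i\tau}\widetilde{T})$ is a multiplication operator with the real-valued multiplier $m(y) := \tfrac{1}{2}\psi_1(x^*_\perp)\,\Re(e^{i\tau}f(y))$.

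Self-adjointness is then automatic: $m$ is real and in $L^\infty(D)$ (since $f\in L^\infty(D)$ and $\psi_1$ is bounded), so the multiplication operator by $m$ equals its own adjoint. For coercivity, I would invoke the hypothesis $\Re(e^{i\tau}f(y))\ge c_3>0$ together with the fact recorded earlier in Section \ref{section operator} that $\psi_1(x^*_\perp)>0$ (this follows from the explicit form of $\psi_1$ and the standing requirement that the measurements be non-vanishing). These two bounds give $m(y)\ge \tfrac{1}{2}\psi_1(x^*_\perp)\,c_3 =: c>0$ a.e.\ on $D$, and hence
\begin{equation*}
\bigl\langle \Re(e^{i\tau}\widetilde{T})h,h\bigr\rangle_{L^2(D)} = \int_D m(y)\,|h(y)|^2\,\mathrm{d}y \;\ge\; c\,\|h\|_{L^2(D)}^2,
\end{equation*}
which is the claimed coercivity.

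There is no real obstacle here; the proof is essentially a one-line computation once the adjoint is recorded. The only point one must be careful about is to use the positivity (not merely non-vanishing) of $\psi_1(x^*_\perp)$, which is why I would explicitly cite the boundary-condition-wise formula for $\psi_1$ given in Section \ref{subsection mode decomposition} when extracting the positive constant $c$.
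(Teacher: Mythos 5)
Your proposal is correct and follows essentially the same route as the paper: both reduce $\Re(e^{i\tau}\widetilde{T})$ to the multiplication operator with real multiplier $\tfrac{1}{2}\psi_1(x^*_\perp)\,\Re(e^{i\tau}f(y))$ and read off self-adjointness and the coercivity bound from the hypothesis $\Re(e^{i\tau}f)\ge c_3>0$ together with $\psi_1(x^*_\perp)>0$. Your version merely makes the intermediate adjoint computation explicit, which the paper leaves implicit.
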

\begin{proof}
From the assumption that $\Re (f e^{i\tau}) \ge c_3>0$ for some constant phase $\tau$, we have that 
\begin{equation}\label{operator re tilde T  e i tau}
\Big(\Re (e^{i\tau}\widetilde{T}) h \Big)(y) :=  \frac{\psi_1(x^*_\perp)\Re (f e^{i\tau})h(y)  }{2}   , \quad y \in D,
\end{equation}
and that
\begin{eqnarray}
 \langle \Re (e^{i\tau}\widetilde{T}) h,h\rangle_{L^2(D)} 
&=& \int_D \frac{ \Re (e^{i\tau}f(y)) {  \psi_1(x^*_\perp) }|h(y)|^2  }{2}  \ind y   \ge  c'\|h\|^2_{L^2(D)} 
 \label{lemma T0 coercive proof eqn1}
\end{eqnarray}
for some positive constant $c'$. This completes the proof.
\end{proof}
Lemma \ref{lemma T0 coercive} would then allow us to derive a $F\#$ version of the factorization method (that  requires less restrictive conditions on the source $f$) which is in the same spirit of the inverse source problem in the whole space  \cite{GriesmaierSchmiedecke-source} (that uses measurements from two opposite directions). However  we are mainly concerned with the most interesting ``backscattering'' case and we shall  discuss another possible  multi-frequency operator $F_\alpha$ in the next section. We then finalize by generalizing the main results of the factorization method and the factorization-based sampling method using $\widetilde{F}$ and $F_\alpha$.
{
\subsection{Another multi-frequency operator based on ``backscattering'' measurements} \label{section multi-frequency operator alpha}
Indeed we can still impose less restrictive conditions on the source $f$ in the case of ``backscattering'' by designing another multi-frequency operator as follows. Here we introduce a multi-frequency far-field operator $F_\alpha:L^2(k_-,k_+(\alpha)) \to L^2(k_-,k_+(\alpha))$ by
\begin{equation} \label{definition far-field operator alpha}
(F_\alpha g)(\sigma;x^*) :=  \int_{k_-}^{k_+(\alpha)} -i\mu_1(\omega_{\sigma\gamma \alpha})  u_p^s\left(x^*; \omega_{\sigma\gamma \alpha}\right) g(\gamma) \ind \gamma, \quad \sigma \in (k_-,k_+(\alpha))
\end{equation}
where $\omega_{\sigma\gamma \alpha}:= \sqrt{\lambda_1^2+ \big(\sigma-\gamma + \frac{1}{\alpha} \sqrt{\lambda_2^2-\lambda_1^2}\big)^2}$ with $\alpha\ge 2$ being a  positive constant that is determined later (by the coercivity assumption of a certain operator),  $k_- =0$ and $k_+(\alpha):= \frac{1}{\alpha}\sqrt{\lambda_2^2-\lambda_1^2}$. Note that for $\sigma, \gamma \in (k_-,k_+(\alpha))$, $\omega_{\sigma \gamma \alpha} \in \Big(\lambda_1,\sqrt{\lambda^2_1 + \frac{4}{\alpha^2} (\lambda^2_2-\lambda^2_1)}\Big) \subseteq (\lambda_1,\lambda_2)$.

%

 In this case, similar to the role of $\omega_{\sigma \gamma}$ in Section \ref{section operator}, one finds via a direct  calculation that $\mu_1(\omega_{\sigma\gamma\alpha}) = \sigma-\gamma+\frac{1}{\alpha}\sqrt{\lambda_2^2-\lambda_1^2}$ (for $\sigma, \gamma \in (k_-,k_+(\alpha))$) which linearizes the dispersion in another (though less obvious) way. In principle, one may find other ways to design multi-frequency operators.
The possible drawback is seen as that $\omega_{\sigma \gamma \alpha}$ has range in $\Big(\lambda_1,\sqrt{\lambda^2_1 + \frac{4}{\alpha^2} (\lambda^2_2-\lambda^2_1)}\Big)$ which is a subset of $(\lambda_1,\lambda_2)$ if $\alpha>2$, this implies that  information in part of the frequency range is only used if $\alpha>2$. Nevertheless, with such a design of the multi-frequency operator, we are able to derive a factorization method with  only the ``backscattering'' measurements with  less restrictive conditions on the source $f$. We also refer to \cite{GriesmaierSchmiedecke-source} for a discussion on  an extension of the factorization method to ``backscattering'' measurements.

The operators $S_\alpha: L^2(k_-,k_+(\alpha)) \to L^2(D)$ and $S^*_\alpha: L^2(D) \to L^2(k_-,k_+(\alpha))$ remain formally the same as in \eqref{operator S} and \eqref{operator S*}, respectively (by only replacing $k_+$ by $k_+(\alpha)$). The difference is the operator $T_\alpha: L^2(D) \to L^2(D)$ defined via
\begin{equation} \label{operator T alpha}
(T_\alpha h)(y) :=  \frac{e^{i \frac{1}{\alpha}\sqrt{\lambda_2^2-\lambda_1^2}|x_1^*-y_1|} \psi_1(x^*_\perp)f(y)h(y)  }{2}   , \quad y \in D.
\end{equation}
Using the definition of $F_\alpha$ \eqref{definition far-field operator alpha}, we can prove similarly that
{\footnotesize
\begin{eqnarray*}
&&(F_\alpha g)(\sigma;x^*) =   
 1/2\int_{k_-}^{k_+(\alpha)} \int_D  \psi_1(y_\perp) { e^{i \left(\sigma-\gamma+\frac{1}{\alpha} \sqrt{\lambda_2^2-\lambda_1^2}\right) |x^*_1-y_1|}     \psi_1(x^*_\perp)}  f(y) g(\gamma)  \ind y \ind \gamma \\
&=&   
\int_D e^{i \sigma |x^*_1-y_1|} \overline{\sqrt{\psi_1(y_\perp)} }\Bigg[\left(\int_{k_-}^{k_+(\alpha)} \sqrt{\psi_1(y_\perp)}e^{-i \gamma |x^*_1-y_1|} g(\gamma)  \ind \gamma \right) \frac{{ e^{i \frac{1}{\alpha} \sqrt{\lambda_2^2-\lambda_1^2} |x^*_1-y_1| } \psi_1(x^*_\perp)} f(y)  }{2}  \Bigg] \ind y \\
&=&  \big( S_\alpha^*T_\alpha S_\alpha g \big) (\sigma).
\end{eqnarray*}
}
Next we investigate the coercivity of the operator $\Re(e^{i\tau} T_\alpha)$.
Recall that $\widetilde{T}: L^2(D) \to L^2(D)$ is given by \eqref{operator T0} in Section \ref{multi-frequency operator backward and forward measurements}.
\begin{theorem} \label{theorem T alpha coercivity}
Suppose that $\Re(e^{i\tau}\widetilde{T})$ is positive and coercive for some constant phase $\tau$. Let $T_\alpha$ be given by  \eqref{operator T alpha}, then there exists a positive constant $\alpha \ge 2$ such that the operator $\Re(e^{i\tau} T_\alpha): L^2(D) \to L^2(D)$ is self-adjoint, coercive  and  
$$
\langle \Re(e^{i\tau} T_\alpha) h,h\rangle_{L^2(D)} \ge c\|h\|^2_{L^2(D)},
$$
for some positive constant $c$.
\end{theorem}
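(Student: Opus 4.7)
The plan is to exploit the fact that $T_\alpha$ is nothing more than multiplication on $L^2(D)$ by the function
$$m_\alpha(y) := \tfrac{1}{2}\,e^{i \beta_\alpha |x_1^* - y_1|}\,\psi_1(x^*_\perp)\,f(y), \qquad \beta_\alpha := \tfrac{1}{\alpha}\sqrt{\lambda_2^2-\lambda_1^2}.$$
Since the adjoint of a multiplication operator by $m$ is multiplication by $\overline{m}$, the operator $\Re(e^{i\tau}T_\alpha)$ is multiplication by the real-valued function $\Re(e^{i\tau}m_\alpha)$, which makes it automatically self-adjoint. Thus
$$\langle \Re(e^{i\tau} T_\alpha) h, h\rangle_{L^2(D)} = \int_D \Re\bigl(e^{i\tau} m_\alpha(y)\bigr)\, |h(y)|^2\, \ind y,$$
and the whole task reduces to a pointwise uniform lower bound of $\Re(e^{i\tau}m_\alpha(y))$ on $D$.

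I would first translate the coercivity hypothesis on $\Re(e^{i\tau}\widetilde{T})$, via \eqref{operator re tilde T e i tau}--\eqref{lemma T0 coercive proof eqn1}, into the pointwise statement $\Re(e^{i\tau} f(y)) \ge c_3 > 0$ almost everywhere on $D$ (as $\psi_1(x^*_\perp)>0$). Writing $\phi_\alpha(y) := \beta_\alpha|x_1^*-y_1|$, a direct expansion gives
$$\Re\bigl(e^{i\tau} m_\alpha(y)\bigr) = \tfrac{\psi_1(x^*_\perp)}{2}\Bigl[\cos\phi_\alpha(y)\,\Re(e^{i\tau}f(y)) - \sin\phi_\alpha(y)\,\Im(e^{i\tau}f(y))\Bigr].$$

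Next, using that $D$ is a bounded Lipschitz domain, set $L := \sup_{y\in D}|x_1^* - y_1| < \infty$ and $M := \|f\|_{L^\infty(D)}$. Then $|\phi_\alpha(y)| \le \beta_\alpha L$ uniformly on $D$, and we obtain the universal lower bound
$$\Re\bigl(e^{i\tau} m_\alpha(y)\bigr) \;\ge\; \tfrac{\psi_1(x^*_\perp)}{2}\bigl[\,c_3 \cos(\beta_\alpha L) \;-\; M\,|\sin(\beta_\alpha L)|\,\bigr].$$
As $\alpha\to\infty$, $\beta_\alpha \to 0$, so $\cos(\beta_\alpha L)\to 1$ and $|\sin(\beta_\alpha L)|\to 0$, hence the right-hand side converges to $\psi_1(x^*_\perp)c_3/2>0$. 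Consequently there exists $\alpha_0 \ge 2$ large enough that the bracketed term is at least $c_3/2$ for all $\alpha\ge\alpha_0$; picking any such $\alpha$ yields the coercivity constant $c := \psi_1(x^*_\perp)c_3/4 > 0$ and completes the argument.

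The main (and in fact only) quantitative subtlety is ensuring that $\beta_\alpha L$ can be made small enough to dominate both $\cos$ close to $1$ and $\sin$ close to $0$; this is free once one notes that $D$ is bounded in the range direction. The role of the assumption $\alpha\ge 2$ is merely to guarantee $k_+(\alpha)\le k_+$ so that the factorization of $F_\alpha$ through $S_\alpha^*T_\alpha S_\alpha$ on $L^2(k_-,k_+(\alpha))$ is meaningful, and it is automatically compatible with taking $\alpha$ larger if needed.
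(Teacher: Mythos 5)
Your proposal is correct and follows essentially the same route as the paper: both proofs rest on the expansion $\cos\zeta\,\Re(e^{i\tau}f)-\sin\zeta\,\Im(e^{i\tau}f)$ with $\zeta=\frac{1}{\alpha}\sqrt{\lambda_2^2-\lambda_1^2}\,|x_1^*-y_1|$ and on choosing $\alpha$ large so that $\zeta$ is uniformly small on the bounded set $D$, thereby perturbing off the coercivity of $\Re(e^{i\tau}\widetilde{T})$. The only cosmetic difference is that you run the perturbation pointwise on the multiplier (after converting the hypothesis to $\Re(e^{i\tau}f)\ge c_3$ a.e., which is legitimate for multiplication operators) and make the constants explicit, whereas the paper estimates the difference of the two quadratic forms directly.
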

\begin{proof}
We first introduce $\zeta :=\frac{1}{\alpha}\sqrt{\lambda_2^2-\lambda_1^2}|x_1^*-y_1|$ in the proof  to avoid writing long equations so that the presentation  is clean and compact.
From the definitions of $T_\alpha$ in \eqref{operator T alpha}, we have that
{\small
\begin{equation*} \label{operator T alpha e i tau }
\Big( (e^{i\tau} T_\alpha) h\Big)(y) =  \frac{e^{i  \zeta} \psi_1(x^*_\perp) e^{i\tau} f(y)h(y)  }{2}, \quad
\Big( (e^{i\tau} T_\alpha)^* h\Big)(y)  =  \frac{e^{-i \zeta} \psi_1(x^*_\perp) \overline{e^{i\tau} f(y)}h(y)  }{2}   , \quad y \in D,
\end{equation*}
}
whereby 
\begin{eqnarray} \label{operator re T alpha e i tau}
&&\hspace{-1cm}\Big( \Re(e^{i\tau} T_\alpha)  h\Big)(y)  =  \Big( \cos ( \zeta  ) \Re(e^{i\tau} f(y)) -  \sin( \zeta) \Im(e^{i\tau} f(y)) \Big) 
\frac{ \psi_1(x^*_\perp)  h(y)  }{2}, \quad y\in D.
\end{eqnarray}
Now from the expressions of $\Re(e^{i\tau} T_\alpha) $ in \eqref{operator re T alpha e i tau} and of $\Re(e^{i\tau}\widetilde{T})$ in \eqref{operator re tilde T  e i tau},
\begin{eqnarray}
&& \left| \langle \Re(e^{i\tau} T_\alpha)h,h\rangle_{L^2(D)} - \langle \Re(e^{i\tau}\widetilde{T}) h,h\rangle_{L^2(D)} \right| \nonumber \\
&=& \Big|\int_D  \Big( \cos(  \zeta ) \Re(e^{i\tau} f(y)) -\Re(e^{i\tau} f(y)) -  \sin(\zeta) \Im(e^{i\tau} f(y)) \Big)  
\frac{ \psi_1(x^*_\perp)  |h(y)|^2  }{2}  \ind y \Big| . \qquad\label{theorem T alpha coercivity proof eqn1}
\end{eqnarray}
Having $y\in D$ in a fixed sampling region, then $|\sqrt{\lambda_2^2-\lambda_1^2}(y_1-x_1^*)|$ is bounded above so that we can always choose $\alpha$ such that $|\zeta|=|\frac{1}{\alpha} \sqrt{\lambda_2^2-\lambda_1^2}(y_1-x_1^*)| < \epsilon'$ for a sufficiently small $\epsilon'>0$, thereby 
$$
\Big| \cos( \zeta ) \Re(e^{i\tau} f(y)) -\Re(e^{i\tau} f(y)) -  \sin(  \zeta ) \Im(e^{i\tau} f(y)) \Big|  < \epsilon''|f|
$$ 
for some sufficiently small $\epsilon''$. Note that $f\in L^\infty(D)$, therefore we can derive from \eqref{theorem T alpha coercivity proof eqn1} that
\begin{eqnarray}
&& \left| \langle \Re(e^{i\tau} T_\alpha)h,h\rangle_{L^2(D)} - \langle \Re(e^{i\tau}\widetilde{T}) h,h\rangle_{L^2(D)} \right|  \le \epsilon'''\|h\|^2_{L^2(D)} \label{theorem T alpha coercivity proof eqn2}
\end{eqnarray}
for some sufficiently small $\epsilon'''$. Together with the assumption that $\Re(e^{i\tau}\widetilde{T})$ is coercive with estimate \eqref{lemma T0 coercive proof eqn1}, we have that
$$
\langle \Re(e^{i\tau} T_\alpha) h,h\rangle_{L^2(D)} \ge c\|h\|^2_{L^2(D)},
$$
for some positive constant $c=c'-\epsilon'''>0$. This completes the proof.
\end{proof}

Based on Theorem \ref{theorem T alpha coercivity} and Lemma \ref{lemma T0 coercive}, a sufficient  and less restrictive  condition (that $\Re(e^{i\tau} f(x)) > c_3>0 $ almost everywhere for $x\in D$ with some $\tau$) on $f$ can be found to ensure that $\Re(e^{i\tau} T_\alpha)$ is coercive.  
\subsection{Generalization of the main results}
Based on the discussion of the multi-frequency operators $\widetilde{F}$ and $F_\alpha$,  we discuss below how to generalize the main Theorems \ref{theorem FM main} and \ref{factorization-based SM theorem}.   In particular, we can prove exactly in the same way  that Theorem \ref{theorem FM main} holds when replacing $F^{1/2}$ by the self-adjoint operator $(F_{\alpha\#})^{1/2}$ or $(\widetilde{F}_\#)^{1/2}$, where $F_{\alpha \#}=\Re(e^{i\tau} F_\alpha)$ and $\widetilde{F}_\#=  \Re(e^{i\tau}\widetilde{F})$. On the other hand, Theorem \ref{factorization-based SM theorem} for the factorization-based sampling method holds when replacing $F$ by  $\Re(e^{i\tau} F_\alpha)$ or $\Re(e^{i\tau}\widetilde{F})$.
}
 
\section*{Acknowledgement}
The author greatly thanks the anonymous referees for their helpful comments that have improved the quality of this manuscript.

\section{Conclusion} \label{section conclusion}
The factorization method is investigated to treat the inverse source problem in waveguides with a single propagating mode at multiple frequencies. A factorization-based sampling method is also discussed. The main result of the factorization method is to provide both theoretical justifications and efficient imaging algorithms  to image extended sources. One key step is to use the multi-frequency measurements  to introduce the multi-frequency far-field operator, which is carefully designed to incorporate the possibly non-linear dispersion relation. We also emphasize that the measurements under consideration are ``backscattering'' measurements (in contrast to measurements obtained in two opposite directions), and we have shown how to use these less measurements to design appropriate multi-frequency operators to deploy the factorization method. 
Each multi-frequency operator  can be factorized as $S^*TS$ which plays a fundamental role in the mathematically rigorous justification of the factorization method. A factorization-based sampling method is also discussed, where the factorization $S^*TS$ and a relevant point spread function play important roles. The sampling methods are capable to image the range support of the extended sources. 
 
 Another interesting observation is the potential to image a complete block, since the linear sampling method may not be capable of imaging such a complete block using  far-field measurements at a single frequency. We provide examples to image both a complete source block and a complete obstacle block 
using the multi-frequency sampling methods. It is shown that the use of one propagating mode at multiple frequencies can efficiently locate a complete block where such efficiency may be of practical interests in long-distance communication optical devices or tunnels.  

This work establishes the fundamental result with  a single propagating mode at multiple frequencies. One direction of future work will be to investigate the factorization method in waveguide imaging using multiple  propagating modes to establish theoretical justifications for extended scatterers in  inverse obstacle/medium problems. 

\bibliographystyle{SIAM}

\end{document}